\newcommand{\R}{\mathbb{R}}
\newcommand{\N}{\mathbb{N}}
\renewcommand{\Pr}{\mathbb{P}}
\newcommand{\II}{\mathbf{1}}
\newcommand{\E}{\mathbb{E}}
\newcommand{\Var}{\textup{Var}}
\newcommand{\Vol}{\textup{Vol}}
\newtheorem{theorem}{Theorem}
\newtheorem{lemma}[theorem]{Lemma}
\newtheorem{prop}[theorem]{Proposition}
\newtheorem{corollary}[theorem]{Corollary}
\theoremstyle{definition}
\newtheorem{remark}[theorem]{Remark}
\numberwithin{theorem}{section}
\numberwithin{equation}{section}
\begin{document}
\title{Volume properties of high-dimensional Orlicz balls}
\author{F. Barthe and P. Wolff}
\maketitle

\begin{abstract}
    We prove asymptotic estimates for the volume of families of Orlicz balls in high dimensions. As an application, we describe a large family of Orlicz balls which verify a famous conjecture of Kannan, Lov\'asz and Simonovits about spectral gaps. We also study the asymptotic independence of coordinates on uniform random vectors on Orlicz balls, as well as integrability properties of their linear functionals.
    
\end{abstract}

Lebesgue spaces play a central role in functional analysis, and enjoy remarkable structural properties. A natural extension of this family is given by the class of Orlicz spaces, which also enjoy a wealth of remarkable properties, see e.g. \cite{RAO-REN}. Similarly, for $p\ge 1$, the unit balls of $\mathbb R^n$ equipped  with the $\ell_p$-norm, often denoted by $B_p^n=\{x\in \mathbb R^n;\; \sum_i |x_i|^p \le 1\}$, are well studied convex bodies, and usually the first family of test cases for new conjectures. Their simple analytic description allows for many explicit calculations, for instance of their volume. A simple probabilistic representation of uniform random vectors on $B_p^n$, given in terms of i.i.d. random variables of law  $\exp(-|t|^p)\, dt/K_p$ is available, see \cite{bgmn}. It allows to investigate various fine properties of the volume distribution on $B_p^n$. The study of general Orlicz balls is more difficult, due to the lack of explicit formulas, in particular for the volume of the set itself. 
 
In this note, we show that probabilistic methods allow to derive precise asymptotic estimates of the volume of Orlicz balls when the dimension tends to infinity, and rough estimates which are valid in every dimension. This allows us to complement a result of Kolesnikov and Milman \cite{kolesnikov-milman} on the spectral gap of uniform measures on Orlicz balls, by giving an explicit description of the range of parameters where their result applies, see Section \ref{sec:gap}. In Section \ref{sec:indep}, we show, among other results, the asymptotic independence of a fixed set of coordinates of uniform random vectors on some families of Orlicz balls of increasing dimensions. This is a natural extension of a classical observation (going back to Maxwell) about uniform vectors on Euclidean spheres and balls. The last section deals with properties of linear functionals of random vectors on Orlicz balls. 

After this research work was completed, we learned by J. Prochno of his independent work \cite{kabluchko-prochno} with Z. Kabluchko about similar volume asymptotics for Orlicz balls. Their paper uses sophisticated methods from the theory of large deviations, which have the potential to give more precise results for a given sequence of balls in increasing dimensions. Our approach is more elementary and focuses on uniform convergence over some wide range of parameters, as required by our applications to the spectral gap conjecture.

\section{Notation and statement}
Throughout this paper, a Young function is a non-negative convex function on $\mathbb R$ which vanishes only at 0. Note that we do not assume symmetry at this stage.
For a given Young function $\Psi \colon \R \to \R^+$, denote
\[
  B_\Psi^n = \Big\{ x \in \R^n \colon \sum_{i=1}^n \Psi(x_i) \le 1 \Big\}
\]
the corresponding $n$-dimensional Orlicz ball. Our aim is to estimate the asymptotic
volume of $B^n_{\Psi/E_n}= \{ x \in \R^n \colon \sum_{i=1}^n \Psi(x_i) \le E_n \}$
for relevant sequences $E_n$ of linear order in the dimension.

Let $\lambda>0$. Consider the following probability measure on $\R$,
\[\mu_\lambda(dt) = e^{-\lambda \Psi(t)} \frac{dt}{Z_\lambda},\] 
with $Z_\lambda$ being a normalization constant.
Let $X$ be a random variable  with the distribution $\mu_\lambda$. Set 
\[m = m_\lambda = \E \Psi(X),\qquad \sigma^2 = \sigma^2_\lambda = \Var\big( \Psi(X)\big).\]

Our aim  is to prove
\begin{theorem}\label{th:volume-asymptotic}
Consider a  Young function $\Psi$ and $\lambda > 0$.
Let $n\ge 1$ be an integer and $\alpha \in \mathbb R$. Set 
\[ E:=m_\lambda n +\alpha \sigma_\lambda\sqrt n,\]
then
\[ \begin{split}
  \Vol\big(B^n_{\Psi/E}\big) &= (Z_\lambda e^{\lambda m_\lambda})^n \frac{1}{\lambda \sigma_\lambda \sqrt{2\pi n}} \, e^{-\alpha^2/2} e^{\lambda \sigma_\lambda \sqrt{n} \alpha} (1 + O(n^{-1/2})) \\
 &= \frac{Z_\lambda^n e^{\lambda E}}{\lambda \sigma_\lambda \sqrt{2\pi n}} \, e^{-
\alpha^2/2} (1 + O(n^{-1/2})),
\end{split} \] 
where the term $O(n^{-1/2})$ depends on $\lambda, \Psi$ and, non-decreasingly in $|\alpha|$.
\end{theorem}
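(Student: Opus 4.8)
The plan is to represent the uniform measure on $B^n_{\Psi/E}$ through independent coordinates, reduce the volume to a one-dimensional density, and then apply a local central limit theorem.

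\emph{Step 1: probabilistic representation.} Let $X_1,\dots,X_n$ be independent copies of $X$ and $S_n:=\sum_{i=1}^n\Psi(X_i)$. Since the law of $(X_1,\dots,X_n)$ has density $Z_\lambda^{-n}e^{-\lambda\sum_i\Psi(x_i)}$ on $\R^n$, one gets at once
\[
  \Vol\big(B^n_{\Psi/E}\big)=\int_{\R^n}\II_{\{\sum_i\Psi(x_i)\le E\}}\,dx=Z_\lambda^n\,\E\big[e^{\lambda S_n}\II_{\{S_n\le E\}}\big].
\]
I would then introduce an auxiliary $\eta\sim\mathrm{Exp}(\lambda)$ independent of the $X_i$ and let $g_n$ be the density of $S_n+\eta$. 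Conditioning on $S_n$ and using the exponential density $t\mapsto\lambda e^{-\lambda t}\II_{\{t>0\}}$ of $\eta$ together with $S_n\ge 0$,
\[
  g_n(E)=\E\big[\lambda e^{-\lambda(E-S_n)}\II_{\{S_n<E\}}\big]=\lambda e^{-\lambda E}\,\E\big[e^{\lambda S_n}\II_{\{S_n<E\}}\big].
\]
Since the level set $\{\sum_i\Psi(x_i)=E\}$ is the boundary of a convex body, hence Lebesgue-null in $\R^n$, the indicator $\II_{\{S_n\le E\}}$ may be replaced by $\II_{\{S_n<E\}}$ in the first display, and the two combine to
\[
  \Vol\big(B^n_{\Psi/E}\big)=\frac{Z_\lambda^n e^{\lambda E}}{\lambda}\,g_n(E).
\]
Everything then reduces to the local estimate $g_n\big(nm+\alpha\sigma\sqrt n\big)=\dfrac{1}{\sigma\sqrt{2\pi n}}\,e^{-\alpha^2/2}\big(1+O(n^{-1/2})\big)$.

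\emph{Step 2: the local limit theorem.} First I would record the basic regularity of the summands: as $e^{-\lambda\Psi}$ is integrable for every $\lambda>0$ (convexity of $\Psi$ forces at least linear growth), $\E e^{t\Psi(X)}=Z_{\lambda-t}/Z_\lambda<\infty$ for all $t<\lambda$, so $\Psi(X)$ has moments of every order, and $m=\E\Psi(X)$, $\sigma^2=\Var\Psi(X)$ are finite and strictly positive (non-degeneracy since $X$ has full support and $\Psi$ is non-constant). Moreover $\Psi(X)$ is absolutely continuous, because $\Psi$ — convex and vanishing only at $0$ — is strictly monotone on each half-line. Hence $S_n+\eta$ is a sum of $n+1$ independent summands with finite third moments, mean $nm+1/\lambda$ and variance $s_n^2:=n\sigma^2+1/\lambda^2$, and already $\Psi(X_1)+\eta$ has a \emph{bounded} density (a probability density convolved with $\lambda e^{-\lambda t}\II_{\{t>0\}}$, so $\le\lambda$), which supplies the smoothness hypothesis. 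The classical local central limit theorem with first-order Edgeworth correction (for independent, not necessarily identically distributed summands) then gives, uniformly for $u$ in a fixed compact interval,
\[
  s_n\,g_n\big(nm+\tfrac1\lambda+u\,s_n\big)=\frac{1}{\sqrt{2\pi}}e^{-u^2/2}\big(1+O(n^{-1/2})\big),
\]
with the implied constant bounded by a polynomial in $|u|$.

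\emph{Step 3: matching the parameters and conclusion.} Plugging $E=nm+\alpha\sigma\sqrt n$ into the last display and using $s_n=\sigma\sqrt n\,(1+O(n^{-1}))$, the relevant argument is
\[
  u_n:=\frac{E-nm-1/\lambda}{s_n}=\Big(\alpha-\frac{1}{\lambda\sigma\sqrt n}\Big)\big(1+O(n^{-1})\big)=\alpha+O\!\big((1+|\alpha|)n^{-1/2}\big),
\]
so, writing $\alpha^2-u_n^2=(\alpha-u_n)(\alpha+u_n)$, one has $\tfrac{1}{\sqrt{2\pi}}e^{-u_n^2/2}=\tfrac{1}{\sqrt{2\pi}}e^{-\alpha^2/2}\big(1+O((1+\alpha^2)n^{-1/2})\big)$. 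Combining this with $1/s_n=(\sigma\sqrt n)^{-1}(1+O(n^{-1}))$ and Step 2 at $u=u_n$ (which stays in a fixed neighbourhood of $\alpha$) yields
\[
  g_n(E)=\frac{1}{s_n}\cdot\frac{1}{\sqrt{2\pi}}e^{-u_n^2/2}\big(1+O(n^{-1/2})\big)=\frac{e^{-\alpha^2/2}}{\sigma\sqrt{2\pi n}}\big(1+O(n^{-1/2})\big),
\]
the error depending on $\lambda,\Psi$ and — once the polynomials in $|\alpha|$ are dominated by a non-decreasing function — non-decreasingly in $|\alpha|$. Inserting this into Step 1 gives the second displayed formula of the theorem; the first follows from $e^{\lambda E}=e^{\lambda mn}e^{\lambda\sigma\sqrt n\,\alpha}$ and $Z_\lambda^n e^{\lambda mn}=(Z_\lambda e^{\lambda m})^n$. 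The real work is confined to Step 2: verifying the moment and smoothness (Cramér) hypotheses of the local CLT — this is where convexity of $\Psi$, via absolute continuity of $\Psi(X)$, and the auxiliary variable $\eta$, which painlessly upgrades this to a bounded density, come in — and invoking the \emph{Edgeworth} form so that the error is genuinely $O(n^{-1/2})$ rather than merely $o(1)$; the remaining steps are routine bookkeeping of error terms, each a polynomial in $\alpha$ times $n^{-1/2}$.
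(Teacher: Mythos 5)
Your Step 1 is correct and is in fact the same mechanism that drives the paper's argument: after the change of measure giving $\Vol(B^n_{\Psi/E})=Z_\lambda^n\,\E\big[e^{\lambda S_n}\II_{\{S_n\le E\}}\big]$, recognizing this expectation as $\lambda^{-1}e^{\lambda E}$ times the (continuous, bounded) density of $S_n+\eta$ at $E$, with $\eta$ an independent $\mathrm{Exp}(\lambda)$ variable, is exactly the reduction the paper performs when it writes $\widetilde{\mathcal I}_2=\int g_V\,d\mathbb{P}_W$ with $V$ exponential and applies Parseval. Your Step 3 bookkeeping ($u_n=\alpha+O((1+|\alpha|)n^{-1/2})$, $s_n=\sigma\sqrt n(1+O(n^{-1}))$) is also fine. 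The gap is Step 2, which is the entire analytic content of the theorem and which you outsource to a ``classical local CLT with first-order Edgeworth correction for independent, not necessarily identically distributed summands'' under the hypotheses ``finite third moments plus one summand with a bounded density.'' No standard theorem has these hypotheses with an $O(n^{-1/2})$ conclusion. The textbook Edgeworth expansions for densities (Petrov, Ch.~VII; Bhattacharya--Ranga Rao) require integrability of some power of the characteristic function of a summand, or of the standardized sum, so that pointwise Fourier inversion is available. Here that can fail: the density of $\Psi(X)$ is $e^{-\lambda s}G'(s)/Z_\lambda$ with $G(s)$ the length of $\{\Psi\le s\}$, and when $\Psi$ is very flat at the origin this density has an arbitrarily severe (integrable) singularity at $0$; then no convolution power of it need be bounded, $\varphi_{\Psi(X)}$ need not belong to any $L^\nu$, and adjoining a single exponential only contributes a factor of order $1/|t|$, which does not make $\varphi_{S_n+\eta}$ integrable for any $n$. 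Boundedness of the density of $\Psi(X_1)+\eta$ does give Gnedenko-type local limit theorems, but only with an $o(1)$ error, not the uniform $O(n^{-1/2})$ rate with controlled, monotone dependence on $|\alpha|$ that the statement asserts and that the paper's later applications (the spectral-gap proposition and Theorems \ref{th:marginals}--\ref{th:boundary-distance}, where the estimate must hold uniformly over a range of $\alpha$'s) actually require.

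So your proposal reduces the theorem to a local limit statement that is essentially equivalent to it and then cites that statement rather than proving it. The route can be completed — Cramér's condition \eqref{eq:cramer} does hold for $\Psi(X)$ by absolute continuity and Riemann--Lebesgue — but completing it is precisely what the paper does in Theorem \ref{th:CLT-exp}: it introduces an auxiliary smoothing variable whose characteristic function is integrable ($Z/n$, and $T/n$ in Lemma \ref{lem:bounded-density}), controls the cost of the smoothing separately (the terms $\mathcal I_3,\mathcal I_4,\mathcal I_5$), and then runs a Parseval ($L^2$) argument combining Esseen's inequality (Lemma \ref{lem:esseen-ineq}) for $|t|\lesssim\sqrt n$ with the Cramér bound \eqref{eq:cramer-condition-2} for large $|t|$ — exactly the work needed to circumvent the non-integrability of $\varphi_{S_n+\eta}$ and to make the error term quantitative and monotone in the parameters. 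Either carry out such a Fourier/smoothing argument for $g_n(E)$ yourself, or locate and verify a precise reference whose hypotheses (moment, smoothness, and Cramér-type conditions for a triangular array containing one exceptional summand) are actually met; as written, the key estimate is assumed, not proved.
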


\begin{corollary} \label{cor:volume-asymptotic}
Consider a  Young function $\Psi$ and $\lambda > 0$.
Let  $(a_n)_{n\ge 1} $ be a bounded sequence, and  
$ E_n:=m_\lambda n +a_n \sqrt n,$
Then when the dimension $n$ tends to $\infty$,
\[ 
  \Vol\big(B^n_{\Psi/E_n}\big) \sim \frac{Z_\lambda^n e^{\lambda E_n}}{\lambda \sigma_\lambda \sqrt{2\pi n}} \, e^{-a_n^2/(2\sigma_\lambda^2)} .
\] 

\end{corollary}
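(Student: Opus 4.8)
The plan is to convert the volume into a probability by inserting the weight $e^{-\lambda\Psi}$, and then to apply a local central limit theorem. Set $Z_\lambda=\int_\R e^{-\lambda\Psi(t)}\,dt$ (finite, since $e^{-\lambda\Psi}$ decays at least exponentially) and let $X_1,\dots,X_n$ be independent, each with law $\mu_\lambda$, so that $(X_1,\dots,X_n)$ has density $Z_\lambda^{-n}e^{-\lambda\sum_i\Psi(x_i)}$ on $\R^n$. Put $S_n=\sum_{i=1}^n\Psi(X_i)$. Writing $1=e^{\lambda\sum_i\Psi(x_i)}\cdot e^{-\lambda\sum_i\Psi(x_i)}$ inside the defining integral and recognizing the joint density gives the exact identity
\[ \Vol\big(B^n_{\Psi/E}\big)=Z_\lambda^n\,\E\big[e^{\lambda S_n}\II\{S_n\le E\}\big]=Z_\lambda^n\int_0^E e^{\lambda t}f_n(t)\,dt, \]
where $f_n$ is the density of $S_n$ (supported on $(0,\infty)$ since $S_n>0$ a.s.). Everything reduces to the asymptotics of $\int_0^E e^{\lambda t}f_n(t)\,dt$, with $E=m_\lambda n+\alpha\sigma_\lambda\sqrt n$.

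First I would record the one-dimensional facts feeding the CLT. As $\Psi$ is convex, non-negative and vanishes only at $0$, it is strictly increasing on $[0,\infty)$ and strictly decreasing on $(-\infty,0]$, so $Y:=\Psi(X)$ has a density; in particular $Y$ is non-lattice and satisfies Cramér's condition $\limsup_{|\xi|\to\infty}|\E e^{i\xi Y}|<1$. The same convexity forces $e^{-\lambda\Psi(t)}$ to decay at least exponentially in $|t|$, so $Y$ has finite moments of every order; hence $m_\lambda=\E Y<\infty$, $\sigma_\lambda^2=\Var Y\in(0,\infty)$ (positive since $Y$ is non-degenerate), and $f_n=f_1^{*n}$ is bounded and continuous for $n$ large. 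The Edgeworth refinement of the local CLT then yields, uniformly in $s\in\R$,
\[ \sigma_\lambda\sqrt n\, f_n\big(m_\lambda n+s\sigma_\lambda\sqrt n\big)=\varphi(s)+O(n^{-1/2}),\qquad \varphi(s):=\frac{1}{\sqrt{2\pi}}e^{-s^2/2}, \]
the error term being a polynomial in $s$ times $\varphi(s)$, uniformly $O(n^{-1/2})$, plus a uniform $o(n^{-1/2})$.

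For the main estimate, substitute $t=E-w$: $\int_0^E e^{\lambda t}f_n(t)\,dt=e^{\lambda E}\int_0^E e^{-\lambda w}f_n(E-w)\,dw$. The exponential weight localizes $w$ near $0$, where $E-w$ lies in the bulk of the law of $S_n$; writing $E-w=m_\lambda n+\big(\alpha-w/(\sigma_\lambda\sqrt n)\big)\sigma_\lambda\sqrt n$ and inserting the uniform local CLT turns the integral into $\frac{e^{\lambda E}}{\sigma_\lambda\sqrt n}\big(\int_0^\infty e^{-\lambda w}\varphi(\alpha-w/(\sigma_\lambda\sqrt n))\,dw+O(n^{-1/2})\big)$ (extending the $w$-integral to $[0,\infty)$ costs only $O(e^{-\lambda E})$). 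Expanding $\varphi$ about $\alpha$ with Lagrange remainder and using $\int_0^\infty w^k e^{-\lambda w}\,dw=k!/\lambda^{k+1}$ (the remainder controlled by $\|\varphi''\|_\infty<\infty$), this last integral equals $\varphi(\alpha)/\lambda+O(n^{-1/2})$, so that
\[ \int_0^E e^{\lambda t}f_n(t)\,dt=\frac{e^{\lambda E}}{\lambda\sigma_\lambda\sqrt n}\big(\varphi(\alpha)+O(n^{-1/2})\big)=\frac{e^{\lambda E}}{\lambda\sigma_\lambda\sqrt{2\pi n}}\,e^{-\alpha^2/2}\big(1+O(n^{-1/2})\big), \]
the $O(n^{-1/2})$ inheriting a non-decreasing dependence on $|\alpha|$ through $|\varphi'(\alpha)|/\varphi(\alpha)=|\alpha|$ and the Hermite polynomials of the Edgeworth term. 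Multiplying by $Z_\lambda^n$ and splitting $e^{\lambda E}=e^{\lambda m_\lambda n}e^{\lambda\alpha\sigma_\lambda\sqrt n}$ with $Z_\lambda^n e^{\lambda m_\lambda n}=(Z_\lambda e^{\lambda m_\lambda})^n$ recovers both displayed forms of Theorem~\ref{th:volume-asymptotic}.

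The main obstacle is the second step: a local CLT for $S_n$ with the stated uniformity in the space variable and the $O(n^{-1/2})$ rate. This requires verifying Cramér's condition and the finiteness of the third moment of $\Psi(X)$ — precisely where the Young-function hypotheses are used — and then either quoting a standard Edgeworth expansion for densities or re-deriving it via Fourier inversion, splitting the inversion integral for $f_n=f_1^{*n}$ into a low-frequency part (Taylor expansion of $\log\E e^{i\xi Y}$ and Gaussian approximation) and a high-frequency part annihilated by Cramér's condition; the rest is routine once the $|\alpha|$-dependence is tracked. Finally, Corollary~\ref{cor:volume-asymptotic} follows at once: apply the theorem with $\alpha=a_n/\sigma_\lambda$, which is bounded since $(a_n)$ is, so the error term is uniform and tends to $0$, giving $\Vol(B^n_{\Psi/E_n})\sim Z_\lambda^n e^{\lambda E_n}\big(\lambda\sigma_\lambda\sqrt{2\pi n}\big)^{-1}e^{-a_n^2/(2\sigma_\lambda^2)}$.
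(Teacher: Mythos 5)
Your final step --- deducing the corollary from Theorem \ref{th:volume-asymptotic} by taking $\alpha=\alpha_n=a_n/\sigma_\lambda$, which stays bounded, and using that the $O(n^{-1/2})$ term depends non-decreasingly on $|\alpha|$, hence is uniform over bounded sets of $\alpha$ --- is exactly how the paper obtains the corollary, and that part is correct.

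The genuine gap is in your re-derivation of Theorem \ref{th:volume-asymptotic} itself, at the step where you invoke ``the Edgeworth refinement of the local CLT'' to get $\sigma_\lambda\sqrt n\,f_n(m_\lambda n+s\sigma_\lambda\sqrt n)=\varphi(s)+O(n^{-1/2})$ uniformly, where $f_n$ is the density of $\sum_i\Psi(X_i)$. Such density expansions (Petrov, Ch.~VII) require more than Cram\'er's condition and $\E|Y|^3<\infty$: they need some convolution power $f_m$ to be bounded (equivalently $|\varphi_Y|^m\in L^1$ for some $m$), and this does \emph{not} follow from $\Psi$ being a Young function. For instance, take $\Psi(t)=|t|e^{-1/|t|}$ near $0$ (convex, non-negative, vanishing only at $0$): the density of $Y=\Psi(X)$ behaves like $c/\big(y\log^2(1/y)\big)$ as $y\to 0^+$, so it lies in no $L^p$ with $p>1$, every convolution power $f_n$ remains unbounded at $0$, and the uniform local limit theorem for densities fails (boundedness of some $f_n$ is necessary for it); moreover $\varphi_Y^n$ need not be integrable, so even pointwise Fourier inversion of $f_n$ is not available. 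Your claim that ``$Y$ has a density, hence $f_n$ is bounded and continuous for $n$ large'' is exactly the false step. This is precisely the obstruction the paper's proof is designed to bypass: it never works with the density of $S_n$, but smooths by adding an independent $Z/n$ (resp.\ $T/n$ in Lemma \ref{lem:bounded-density}) to obtain a bounded density, and then runs the Parseval/characteristic-function argument using only Cram\'er's condition \eqref{eq:cramer} and Lemma \ref{lem:esseen-ineq}. To salvage your route you would need either an additional smoothness hypothesis on $\Psi$ ensuring integrability of a power of $\varphi_Y$, or a smoothing device of the same kind as in the paper.
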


Let us mention that the above results can be applied to $B^n_{\Psi/E_n}$ when $E_n=mn+a_n\sqrt n$ where $m>0$ is fixed 
and $(a_n)_n$ is a bounded sequence. Indeed  the next lemma ensures the existence of a  $\lambda>0$ such that $m=m_\lambda$.
\begin{lemma}
Let $\Psi$ as above. Then the map defined $(0,+\infty)$ to $(0,+\infty)$ by 
\[ \lambda\mapsto R(\lambda):=\frac{\int \Psi(t) e^{-\lambda \Psi(t)}dt}{\int e^{-\lambda \Psi(t)}dt}\] 
 is onto.
\end{lemma}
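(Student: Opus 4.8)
The plan is to show that $R(\lambda)$ is continuous on $(0,\infty)$, that $R(\lambda)\to 0$ as $\lambda\to\infty$, and that $R(\lambda)\to\infty$ as $\lambda\to 0^+$; the conclusion then follows from the intermediate value theorem. Note first that the integrals $\int e^{-\lambda\Psi(t)}\,dt$ and $\int \Psi(t)e^{-\lambda\Psi(t)}\,dt$ are finite for every $\lambda>0$: since $\Psi$ is a Young function it is convex, positive away from $0$, and hence grows at least linearly at $\pm\infty$, so $e^{-\lambda\Psi}$ is integrable and multiplying by the at-most-exponentially-beaten factor $\Psi$ keeps it integrable. Continuity of $R$ on $(0,\infty)$ follows from dominated convergence: on any compact subinterval $[\lambda_0,\lambda_1]\subset(0,\infty)$ one dominates $e^{-\lambda\Psi(t)}$ by $e^{-\lambda_0\Psi(t)}$ and $\Psi(t)e^{-\lambda\Psi(t)}$ by $\Psi(t)e^{-\lambda_0\Psi(t)}$, both integrable, so numerator and denominator are continuous (indeed smooth) in $\lambda$, and the denominator never vanishes.

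For the limit as $\lambda\to\infty$: with $X\sim\mu_\lambda$ we have $R(\lambda)=\E\,\Psi(X)$. As $\lambda\to\infty$ the measure $\mu_\lambda$ concentrates near the set where $\Psi$ is smallest, i.e. near $0$ where $\Psi(0)=0$. Concretely, for any $\varepsilon>0$, $\int_{\{\Psi>\varepsilon\}} \Psi(t)e^{-\lambda\Psi(t)}\,dt \le e^{-(\lambda-1)\varepsilon}\int \Psi(t)e^{-\Psi(t)}\,dt \to 0$, while $\int e^{-\lambda\Psi(t)}\,dt \ge \int_{\{\Psi\le\varepsilon\}} e^{-\lambda\Psi(t)}\,dt \ge e^{-\lambda\varepsilon}\,\Vol(\{\Psi\le\varepsilon\})$, and the latter set has positive Lebesgue measure since $\Psi$ is continuous with $\Psi(0)=0$. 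Hence $R(\lambda)\le \varepsilon + o(1)$ after splitting the numerator over $\{\Psi\le\varepsilon\}$ and its complement and bounding the first piece by $\varepsilon$ times the denominator; letting $\lambda\to\infty$ and then $\varepsilon\to 0$ gives $R(\lambda)\to 0$.

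For the limit as $\lambda\to 0^+$: one shows $\int \Psi(t)e^{-\lambda\Psi(t)}\,dt \to \int\Psi(t)\,dt = +\infty$ (by monotone convergence, as $\lambda\downarrow 0$ the integrand increases to $\Psi(t)$, which is non-integrable since $\Psi$ grows at least linearly), while the denominator $\int e^{-\lambda\Psi(t)}\,dt$ is easier to control from above: on the one hand it tends to $+\infty$ too, so one needs the ratio. The cleanest route is a change of variables or a direct comparison: pick $c>0$ with $\Psi(t)\le c|t|+c$ is false in general, so instead use that $\Psi$ grows at least linearly, say $\Psi(t)\ge a|t|-b$ for constants $a>0$, $b\in\R$ (valid since a convex function positive at infinity has nonzero slopes), giving $\int e^{-\lambda\Psi(t)}\,dt \le e^{\lambda b}\int e^{-\lambda a|t|}\,dt = \frac{2e^{\lambda b}}{\lambda a}$, whereas a matching lower bound $\int \Psi(t)e^{-\lambda\Psi(t)}\,dt$ of order at least $\lambda^{-2}$ can be obtained by integrating over a region where $\Psi(t)\asymp 1/\lambda$. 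Comparing, $R(\lambda)\gtrsim \lambda^{-1}\to\infty$. The main obstacle is this last step: getting a clean two-sided asymptotic for the ratio as $\lambda\to 0$ requires a little care with the at-most/at-least linear growth of a general (possibly non-symmetric) Young function, but once the linear lower bound $\Psi(t)\ge a|t|-b$ is in hand the estimates are routine. Combining the three facts, $R$ is a continuous surjection from $(0,\infty)$ onto $(0,\infty)$.
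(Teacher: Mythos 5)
Your continuity argument and your treatment of the limit $\lambda\to\infty$ are essentially the same as the paper's: split the numerator over $\{\Psi\le\varepsilon\}$ and its complement, bound the small-set piece by $\varepsilon$ times the denominator, control the tail by shifting a unit of $\lambda$ into the exponential, and lower-bound the denominator by the measure of a small level set of $\Psi$. That part is fine.

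The $\lambda\to 0^+$ step, however, has a real gap. You try to produce two-sided rate estimates, claiming $\int e^{-\lambda\Psi}\le\frac{2e^{\lambda b}}{\lambda a}$ (correct, from $\Psi(t)\ge a|t|-b$) and a matching lower bound $\int\Psi\,e^{-\lambda\Psi}\gtrsim\lambda^{-2}$ obtained by integrating over a region where $\Psi\asymp 1/\lambda$. The second claim is false for general Young functions. Take $\Psi(t)=e^{|t|}-1$: then the set $\{1/(2\lambda)\le\Psi\le 1/\lambda\}$ has Lebesgue measure of order a constant (about $\log 2$), so that region contributes only $O(\lambda^{-1})$, not $\lambda^{-2}$; meanwhile the denominator $\int e^{-\lambda\Psi}$ is of order $\log(1/\lambda)$, nowhere near $\lambda^{-1}$. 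The ratio still diverges, but your claimed orders do not hold, and you flag this yourself ("requires a little care..."), so the proof as written is incomplete precisely at the step you identify as the obstacle.

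The paper avoids the need for any rate matching. Since $\Psi\ge 0$, for any $K>0$ one has
\[
R(\lambda)\;\ge\; K\,\frac{\int_{\{\Psi\ge K\}} e^{-\lambda\Psi}}{\int e^{-\lambda\Psi}}
\;=\;K\left(1-\frac{\int_{\{\Psi<K\}}e^{-\lambda\Psi}}{\int e^{-\lambda\Psi}}\right)
\;\ge\;K\left(1-\frac{\mathrm{Vol}(\{\Psi<K\})}{\int e^{-\lambda\Psi}}\right),
\]
and since $\int e^{-\lambda\Psi}\to\infty$ by monotone convergence while $\mathrm{Vol}(\{\Psi<K\})$ is a fixed finite number, $\liminf_{\lambda\to 0^+}R(\lambda)\ge K$ for every $K$. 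This sidesteps entirely the growth rate of $\Psi$, and you should replace your rate-matching attempt with an argument of this kind (your own $\lambda\to\infty$ computation is in exactly this spirit, so the fix is close at hand).
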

\begin{proof}
By hypothesis, $\int \exp(-\lambda \Psi)<\infty$ for all $\lambda>0$. This fact allows us to apply the dominated convergence theorem, and to show that the ratio $R(\lambda)$ is a continuous function of $\lambda>0$. Let us show that $\lim_{\lambda\to 0^+}R(\lambda)=\infty$ and  $\lim_{\lambda\to \infty}R(\lambda)=0$. The claim will then follow by continuity. 

Consider an arbitrary $K>0$. Since $\Psi\ge 0$,
\begin{align*}
\frac{\int \Psi e^{-\lambda\Psi}}{\int e^{-\lambda\Psi}}&\ge  \frac{K\int_{\Psi\ge K} e^{-\lambda\Psi}}{\int e^{-\lambda\Psi}}=K\left(1- \frac{\int_{\Psi< K} e^{-\lambda\Psi}}{\int e^{-\lambda\Psi}} \right)
\ge K\left(1- \frac{\mathrm{Vol}(\{x; \Psi(x)<K\})}{\int e^{-\lambda\Psi}} \right).
\end{align*}
By monotone convergence, $\lim_{\lambda\to 0^+} \int e^{-\lambda \Psi}=\infty$. Hence, $\liminf_{\lambda\to 0^+} R(\lambda)\ge K$. Since this holds for every
$K>0$, we conclude that $\lim_{\lambda\to 0^+} R(\lambda)=\infty$.

Let $\varepsilon>0$. As above, since $\Psi\ge0$,
\[\frac{\int \Psi e^{-\lambda\Psi}}{\int e^{-\lambda\Psi}} \le \varepsilon+ \frac{\int_{\Psi>\varepsilon} \Psi e^{-\lambda\Psi}}{\int e^{-\lambda\Psi}} . \]
Next, using $x\le e^x$ and for $\lambda>2$,
\[ \int_{\Psi>\varepsilon} \Psi e^{-\lambda\Psi} \le \int_{\Psi>\varepsilon}  e^{-(\lambda-1)\Psi}
= \int_{\Psi>\varepsilon}  e^{-\Psi} e^{-(\lambda-2)\Psi} \le e^{-(\lambda-2)\varepsilon} \int e^{-\Psi},\]
and 
\[ \int e^{-\lambda \Psi} \ge  \int_{\Psi\le \varepsilon/2} e^{-\lambda \Psi} \ge 
 e^{-\lambda \varepsilon/2} \mathrm{Vol}(\{x; \Psi(x)\le \varepsilon/2\}).\]
 Since $\Psi(0)=0$ and $\Psi$ is continuous, the latter quantity is positive. 
 Combining the above three estimates, we get 
 \[\frac{\int \Psi e^{-\lambda\Psi}}{\int e^{-\lambda\Psi}} \le \varepsilon+e^{-(\frac{\lambda}{2}-2) \varepsilon } \frac{\int e^{-\Psi}}{ \mathrm{Vol}(\{x; \Psi(x)\le \varepsilon/2\}) }. \]
 Letting $\lambda\to \infty$ yields $\limsup_{\lambda\to\infty} R(\lambda)\le \varepsilon$, for all $\varepsilon>0$.
\end{proof}

\section{Probabilistic formulation}
We start with a formula relating the volume with an expectation expressed in terms of independent
random variables.
Let $\lambda>0$.
Let $(X_i)_{i\in \N^*}$ be i.i.d. r.v.'s with the distribution $\mu_\lambda(dt) = e^{-\lambda \Psi(t)} \,dt/Z_\lambda$.  Recall that 
$m_\lambda = \E \Psi(X_i)$ and $\sigma^2_\lambda = \Var\big(\Psi( X_i)\big).$
We denote by $S_n$ the normalized central limit sums:
   \[S_n = \frac{1}{\sigma_\lambda \sqrt{n}} \sum_{i=1}^n (\Psi(X_i) - m_\lambda).\]
With this notation, we get the following representation for any $\lambda>0$
\begin{align}
  \Vol\big(B^n_{\Psi/E}\big) & = \int \II_{\{\sum_{i=1}^n \Psi(x_i) \le E\}} dx
  =\int \II_{\{\sum_{i=1}^n \Psi(x_i) \le E\}} Z_\lambda^n e^{\lambda \sum_{i=1}^n \Psi(x_i)}  \prod_{i=1}^n \mu_\lambda(dx_i) \nonumber\\
  &=Z_\lambda^n \E\left( e^{\lambda \sum_{i=1}^n \Psi(X_i)} \II_{\{\sum_{i=1}^n \Psi(X_i) \le E\}}\right)\nonumber  \\
  &= (Z_\lambda e^{\lambda m_\lambda})^n \E \left(e^{\lambda \sigma_\lambda \sqrt{n} S_n} \II_{\{S_n \le \frac{E - m_\lambda n}{\sigma_\lambda \sqrt{n}}\}}\right).
  \label{eq:probabilistic-volume}
\end{align}
By the Central Limit Theorem, $S_n$ converges in distribution to a standard Gaussian random variable when $n$ tends to infinity.
Such Gaussian approximation results allow to estimate the asymptotic behaviour of the above expectations. Nevertheless,
a direct application of the CLT or the Berry-Esseen bounds does not seem to be sufficient for our purposes.
A more 
refined analysis is required, 
 built on classical results and techniques on the distribution of sums of independent random variables which go back to Cram\'er~\cite{cramer-1938} (see also~\cite{bahadur-rao-1960}).  
 
 Theorem \ref{th:volume-asymptotic} is a direct consequence of the following one, applied to $Y_i=(\Psi(X_i)-m_\lambda)/\sigma_\lambda$. For a random variable $V$, let $\mathbb{P}_V$ and $\varphi_V$ denote the distribution and the characteristic function.
 
 \begin{theorem}\label{th:CLT-exp}
 Let $(Y_i)_{i\ge 1}$ be a sequence of i.i.d. real random variables such that $\mathbb E|Y_i|^3<\infty$, $\mathbb E Y_i=0$ and $\mathrm{Var}(Y_i)=1$.    Suppose $\varepsilon, \delta > 0$ are such that so-called Cram\'er's condition is satisfied for $Y_i$:
\begin{equation}\label{eq:cramer}
  |\varphi_{Y_i}(t)| \le 1 - \varepsilon \quad \textup{for $|t| > \delta$}.
\end{equation}
For $n\ge 1$, let  $S_n=(Y_1+\cdots+Y_n)/\sqrt{n}$. Then for $\ell >0$ and $\alpha\in \R$,
\[\mathbb E \left( e^{\ell \sqrt{n}\, S_n} \mathbf{1}_{S_n\le \alpha}\right) =\frac{1}{\ell \sqrt{2\pi n}} e^{\ell \sqrt{n}\alpha-\alpha^2/2} \big(1+O(n^{-1/2})\big). \]

\end{theorem}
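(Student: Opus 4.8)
The plan is to recast the estimate as a local limit theorem at the level of densities, carrying an exponential weight, and then to prove that local statement by the classical Fourier-analytic method of Cram\'er, keeping track of error terms of order $n^{-1/2}$.

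\emph{Reduction.} Let $W_n$ be independent of the $Y_i$ with the exponential distribution of rate $\ell\sqrt n$, so that its density at $x\ge0$ equals $\ell\sqrt n\,e^{-\ell\sqrt n x}$. Conditioning on $S_n$ and writing $e^{\ell\sqrt n s}=\tfrac{e^{\ell\sqrt n\alpha}}{\ell\sqrt n}\cdot\ell\sqrt n\,e^{-\ell\sqrt n(\alpha-s)}$,
\[
  \E\big(e^{\ell\sqrt n\,S_n}\,\mathbf 1_{S_n\le\alpha}\big)
  =\frac{e^{\ell\sqrt n\alpha}}{\ell\sqrt n}\int_{-\infty}^{\alpha} \ell\sqrt n\,e^{-\ell\sqrt n(\alpha-s)}\,\Pr_{S_n}(ds)
  =\frac{e^{\ell\sqrt n\alpha}}{\ell\sqrt n}\,h_n(\alpha),
\]
where $h_n$ is the density of $S_n+W_n$. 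Since $\frac{e^{\ell\sqrt n\alpha}}{\ell\sqrt n}\cdot\frac1{\sqrt{2\pi}}e^{-\alpha^2/2}=\frac1{\ell\sqrt{2\pi n}}e^{\ell\sqrt n\alpha-\alpha^2/2}$, the theorem reduces to
\[
  h_n(\alpha)=\frac1{\sqrt{2\pi}}e^{-\alpha^2/2}\big(1+O(n^{-1/2})\big).
\]
Adding $W_n$ serves two purposes: $S_n+W_n$ automatically has a density (even if $S_n$ has atoms), and, integrating by parts, $h_n(\alpha)=\ell\sqrt n\int_0^\infty e^{-u}\big(F_{S_n}(\alpha)-F_{S_n}(\alpha-u/(\ell\sqrt n))\big)\,du$ is an average of increments of $F_{S_n}$ over intervals of length $\asymp n^{-1/2}$ near $\alpha$. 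This last form makes clear why care is needed: the quantity is itself of size $n^{-1/2}$, so the Central Limit Theorem, or even a Berry--Esseen estimate for $F_{S_n}-\Phi$, only gives the correct order and not the asymptotics --- a genuine local limit theorem is required.

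\emph{Fourier inversion.} Write $\varphi=\varphi_{Y_1}$ and $\varphi_{W_n}(t)=(1-it/(\ell\sqrt n))^{-1}$. By Fourier inversion
\[
  h_n(\alpha)=\frac1{2\pi}\int_{\R}\varphi\!\Big(\frac t{\sqrt n}\Big)^{\!n}\frac{1}{1-it/(\ell\sqrt n)}\,e^{-it\alpha}\,dt,
\]
to be compared with $\frac1{\sqrt{2\pi}}e^{-\alpha^2/2}=\frac1{2\pi}\int_\R e^{-t^2/2}e^{-it\alpha}\,dt$. Fix $\delta_0\in(0,\delta]$ small enough that, using $\E Y_1=0$, $\E Y_1^2=1$, one has $|\varphi(s)|\le e^{-s^2/3}$ for $|s|\le\delta_0$, and split the range at $|t|=\delta_0\sqrt n$. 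On $|t|\le\delta_0\sqrt n$ the finite third moment yields $n\log\varphi(t/\sqrt n)=-\tfrac{t^2}2-\tfrac{i\mu_3 t^3}{6\sqrt n}+\gamma_n(t)$ with $\mu_3=\E Y_1^3$ and $|\gamma_n(t)|\le (|t|^3/\sqrt n)\,\eta_n(t/\sqrt n)$ for a function $\eta_n(s)\to0$ as $s\to0$; restricting further to $|t|\le n^{1/6}$ (the complement $n^{1/6}\le|t|\le\delta_0\sqrt n$ contributing only $O(e^{-cn^{1/3}})$, thanks to the factor $e^{-t^2/3}$) and combining with $(1-it/(\ell\sqrt n))^{-1}=1+\tfrac{it}{\ell\sqrt n}+O(t^2/n)$, the integrand becomes $e^{-t^2/2}\big(1+\tfrac1{\sqrt n}(\tfrac{it}{\ell}-\tfrac{i\mu_3 t^3}6)+O(t^6/n)+o(|t|^3/\sqrt n)\big)$. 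Using the Gaussian Fourier identities $\frac1{2\pi}\int e^{-t^2/2}(it)e^{-it\alpha}\,dt=\alpha\,\phi(\alpha)$ and $\frac1{2\pi}\int e^{-t^2/2}(it^3)e^{-it\alpha}\,dt=(3\alpha-\alpha^3)\phi(\alpha)$, the inner range contributes $\phi(\alpha)\big(1+n^{-1/2}P(\alpha)+o(n^{-1/2})\big)$ with $P(\alpha)=\tfrac\alpha\ell+\tfrac{\mu_3}6(\alpha^3-3\alpha)$; in particular this is $\phi(\alpha)(1+O(n^{-1/2}))$ with relative error a fixed polynomial in $\alpha$, hence dominated by a quantity non-decreasing in $|\alpha|$.

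\emph{The outer range --- the main obstacle.} Here Cram\'er's condition enters: since $Y_1$ is non-degenerate, $|\varphi|<1$ off the origin, so by compactness of $\{\delta_0\le|s|\le\delta\}$ and \eqref{eq:cramer} one has $|\varphi(s)|\le1-\varepsilon'$ for all $|s|\ge\delta_0$, whence $|\varphi(t/\sqrt n)^n|\le(1-\varepsilon')^n$ on $|t|>\delta_0\sqrt n$. The real difficulty is that the factor $(1-it/(\ell\sqrt n))^{-1}$ only decays like $1/|t|$, so this bound alone does not make the tail integral converge --- under \eqref{eq:cramer} alone, $\varphi^n$ need not be integrable, e.g.\ if $Y_1$ has an atomic part. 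I would handle this by a Lebesgue decomposition of $\Pr_{Y_1}$: the non-absolutely-continuous part of $\Pr_{S_n}$ carries total mass at most $c^n$ for some $c<1$, hence contributes at most $c^n e^{\ell\sqrt n\alpha}=o(n^{-1/2}e^{\ell\sqrt n\alpha})$ and may be discarded; from the absolutely continuous part one extracts a sub-probability measure $\nu$ with a bounded, compactly supported density (so $\widehat\nu\in L^2$), and in the resulting binomial expansion of $\Pr_{S_n}$ the terms in which $\nu$ occurs at least twice have characteristic function bounded by the integrable function $|\widehat\nu(\cdot/\sqrt n)|^2$, while the remaining terms again carry only exponentially small mass. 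This legitimizes the inversion formula and bounds the outer range by $O(\sqrt n\,(1-\varepsilon')^n)=o(n^{-1/2})$. (In the setting of Theorem~\ref{th:volume-asymptotic}, where $Y_i=(\Psi(X_i)-m_\lambda)/\sigma_\lambda$ and $X_i$ has the density $e^{-\lambda\Psi}/Z_\lambda$, this step is immediate: $Y_1$ has a density whose Fourier transform decays polynomially, so $\varphi^\nu\in L^1$ for some integer $\nu$, and for $n\ge\nu$ one simply estimates $|\varphi(t/\sqrt n)^n|\le(1-\varepsilon')^{\,n-\nu}\,|\varphi(t/\sqrt n)|^{\nu}$ before integrating.) Assembling the inner and outer estimates gives $h_n(\alpha)=\phi(\alpha)(1+O(n^{-1/2}))$, and substituting into the reduction identity completes the proof.
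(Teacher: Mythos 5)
Your overall strategy (rewrite the expectation as $\frac{e^{\ell\sqrt n\alpha}}{\ell\sqrt n}$ times the density at $\alpha$ of $S_n$ plus an independent $\mathcal Exp(\ell\sqrt n)$ variable, then run a Cram\'er-type Fourier argument with an inner Edgeworth-type range and an outer range controlled by \eqref{eq:cramer}) is essentially the same as the paper's, which also works with the exponential variable $V$ of rate $\ell\sqrt n$ and Parseval. The place where you diverge is exactly the place where your argument has a genuine gap: the justification of Fourier inversion and the convergence of the outer integral. The paper handles this by an additional regularization, convolving with a tiny independent variable ($Z/n$ in the main proof, $T/n$ with $\varphi_T\in L^1$ in Lemma \ref{lem:bounded-density}); this both makes the characteristic function integrable and, crucially, inserts the factor $e^{-t^2/(2n^2)}$, so the outer range is bounded by $(1-\varepsilon')^{n/2}\cdot O(n)=O(n^{-1/2})$ under Cram\'er's condition alone, at the price of the first step estimating $|\mathcal I_2-\mathcal I|$ via the bounded-density lemma. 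Your substitute for this device --- a Lebesgue decomposition of $\Pr_{Y_1}$ --- does not work in the stated generality. Condition \eqref{eq:cramer} does not imply that the law of $Y_1$ has a nonzero absolutely continuous component: there exist purely singular continuous (Rajchman) laws whose characteristic functions tend to $0$ at infinity, hence satisfy \eqref{eq:cramer}. For such a law your claim that ``the non-absolutely-continuous part of $\Pr_{S_n}$ carries total mass at most $c^n$ for some $c<1$'' is unsupported (the constant $c$ in your sketch is $1-p$ where $p$ is the mass of the a.c.\ part, and here $p=0$), and there is no $\nu$ with a bounded compactly supported density to extract. So the outer-range step, and with it the legitimacy of the inversion formula, fails for a class of laws allowed by the hypotheses; it is only ``immediate'' in the application to Theorem \ref{th:volume-asymptotic}, not for Theorem \ref{th:CLT-exp} itself.

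Two smaller points, repairable but worth flagging. First, even when the a.c.\ part has positive mass $q=\nu(\R)>0$, the bound you state for the ``at least two $\nu$'s'' part of the binomial expansion, namely $|\widehat\nu(\cdot/\sqrt n)|^2$ (up to $q^{-2}$), is integrable in $t$ but carries no decay in $n$, while the bound $(1-\varepsilon')^n$ carries no integrability in $t$; to get your claimed $O(\sqrt n\,(1-\varepsilon')^n)$ you need a single bound that is both, e.g.\ $q^{-2}\big(\sup_{|s|\ge\delta_0}(|\widehat\nu(s)|+|\widehat\rho(s)|)\big)^{n-2}|\widehat\nu(t/\sqrt n)|^2$ with $\rho=\Pr_{Y_1}-\nu$, where the supremum is shown to be $<1$ by strict inequality $|\widehat\nu(s)|<q$ for $s\neq0$, compactness on $[\delta_0,M]$ and Riemann--Lebesgue for $|s|>M$; this should be spelled out. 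Second, your inner-range expansion uses a Peano remainder for $\log\varphi$, which proves the theorem as stated but loses the quantitative dependence of the $O(n^{-1/2})$ on $(\ell,\delta,1/\varepsilon,\nu_3,|\alpha|)$ recorded in the paper's remark; the paper instead uses the Esseen bound of Lemma \ref{lem:esseen-ineq}, which keeps that dependence explicit.
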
 
\begin{remark}
The term $O(n^{-1/2})$ involves an implicit dependence in $\ell, \alpha$ and the law of $Y_1$.
For $n\ge 16 \ell^2+ (2|\alpha|+1)^2 \ell^{-2}$, our argument provides a term $O(n^{-1/2})$ which depends only on
$(\ell, \delta, 1/\varepsilon, \nu_3:=\E |Y_i|^3, |\alpha|)$. Moreover the dependence is continuous in the parameters, and non-decreasing in all the parameters but $\ell$. This allows for uniform bounds when the parameters are in compact subsets of their domain. 
\end{remark}
\begin{remark}
Note that non-trivial $\varepsilon$ and $\delta$ exist by the Riemann-Lebesgue lemma as soon has the law of $Y_i$ is absolutely continuous.
 \end{remark}
 
 \section{Probabilistic preliminaries}
We start with  some useful lemmas.
The first one is a key estimate for quantitative central limit theorems, quoted from Petrov's book \cite{petrov}.

\begin{lemma}[\cite{petrov}, Lemma V.2.1, p. 109]\label{lem:esseen-ineq}
Let $X_1, \ldots, X_n$ be independent random variables, $\E X_j = 0$, $\E |X_j|^3 < \infty$ ($j=1,\ldots, n$). Denote $B_n = \sum_{j=1}^n \E X_j^2$, $L_n = B_n^{-3/2} \sum_{j=1}^n \E|X_j|^3$ and $S_n = B_n^{-1/2} \sum_{j=1}^n X_j$. Then
\[
  |\varphi_{S_n}(t) - e^{-t^2/2}| \le 16 L_n |t|^3 e^{-t^2/3}
\]
for $|t| \le \frac{1}{4L_n}$.
\end{lemma}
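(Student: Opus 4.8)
The plan is to run the classical Fourier-analytic argument behind all Berry--Esseen-type estimates: factor $\varphi_{S_n}$ over the independent summands, compare each factor to the matching Gaussian factor through a third-order Taylor expansion with explicit remainder (this is where $\E X_j=0$, $\E X_j^2=\sigma_j^2$ and $\E|X_j|^3<\infty$ are used), and recombine the factors by a telescoping product identity while carrying along a Gaussian damping factor. The restriction $|t|\le 1/(4L_n)$ is dictated precisely by the requirement that the cumulative cubic error stay below a fixed multiple of $t^2$.

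Concretely, write $\sigma_j^2=\E X_j^2$, $\beta_j=\E|X_j|^3$ and $f_j(u)=\E e^{iuX_j}$, so that $\varphi_{S_n}(t)=\prod_{j\le n}f_j(t/\sqrt{B_n})$ and, since $\sum_j\sigma_j^2=B_n$, also $e^{-t^2/2}=\prod_{j\le n}e^{-\sigma_j^2t^2/(2B_n)}$. Put $a_j=\sigma_j^2/B_n$ (so $\sum_j a_j=1$) and $\ell_j=\beta_j/B_n^{3/2}$ (so $\sum_j\ell_j=L_n$, and $a_j\le\ell_j^{2/3}\le L_n^{2/3}$ by Lyapunov's inequality $\sigma_j^3\le\beta_j$). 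Using $\E X_j=0$ and $|e^{ix}-1-ix+x^2/2|\le\min(|x|^3/6,\,x^2)$ one gets $f_j(t/\sqrt{B_n})=1-\tfrac12 a_jt^2+r_j(t)$ with $|r_j(t)|\le\tfrac16\ell_j|t|^3$, and similarly $e^{-a_jt^2/2}=1-\tfrac12 a_jt^2+\rho_j(t)$ with $|\rho_j(t)|\le\tfrac18 a_j^2t^4$; hence $|f_j(t/\sqrt{B_n})-e^{-a_jt^2/2}|\le\tfrac16\ell_j|t|^3+\tfrac18 a_j^2t^4$. On $|t|\le 1/(4L_n)$ we have $\ell_j|t|\le\tfrac14$ for every $j$, which lets one derive a per-factor modulus bound $|f_j(t/\sqrt{B_n})|\le e^{-\kappa a_jt^2}$ for an explicit $\kappa>0$ on the indices whose contribution is not negligible (the indices for which the cubic correction dominates the quadratic one carry only a small fraction of $\sum_j a_j=1$, by the same inequality $\sum_j\ell_j|t|\le\tfrac14$, and for those one simply uses $|f_j|\le1$). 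Feeding this, together with $e^{-a_jt^2/2}\le1$, into the telescoping identity
\[
  \prod_{j\le n}z_j-\prod_{j\le n}w_j=\sum_{k\le n}\Bigl(\prod_{j<k}w_j\Bigr)(z_k-w_k)\Bigl(\prod_{j>k}z_j\Bigr),
\]
with $z_j=f_j(t/\sqrt{B_n})$ and $w_j=e^{-a_jt^2/2}$, yields $|\varphi_{S_n}(t)-e^{-t^2/2}|\le\sum_{k\le n}|z_k-w_k|\,e^{-\gamma t^2(1-a_k)}$ for a fixed $\gamma>0$; summing $|z_k-w_k|\le\tfrac16\ell_k|t|^3+\tfrac18 a_k^2t^4$ with $\sum_k\ell_k=L_n$ and $\sum_k a_k^2\le L_n^{4/3}$ then produces the leading term $\tfrac16 L_n|t|^3$ times a Gaussian damping factor, and collecting numerical constants is what gives $16\,L_n|t|^3e^{-t^2/3}$.

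The main obstacle is extracting the precise exponent $1/3$ and front constant $16$. Two points are delicate. First, an individual $\beta_j$ can be large even when $L_n$ is small (spiky summands), so for $|t|$ near the upper end $1/(4L_n)$ the quantities $a_jt^2$ need not be uniformly small and the term-by-term expansion degrades; there one argues instead that $|\varphi_{S_n}(t)|$ and $e^{-t^2/2}$ are \emph{each} bounded by $16L_n|t|^3e^{-t^2/3}$ separately --- immediate once one observes that $L_n|t|^3$ is then of order $L_n^{-2}\gg1$ whereas both quantities are exponentially small --- and one matches this regime with the Taylor regime at a threshold $|t|\asymp L_n^{-1/3}$. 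Second, in the telescoping one must account for the spiky indices by a counting argument so that the tail products $\prod_{j>k}|z_j|$ still exhibit Gaussian decay up to a universal multiplicative factor. Threading the two regimes and tracking all constants to land exactly on $16$ is the technical heart; the Fourier skeleton is otherwise routine. (Absolute continuity of the $X_j$ is not needed for this lemma; Cram\'er's condition \eqref{eq:cramer} enters only later, to control $|t|$ beyond $1/(4L_n)$ in the proof of Theorem~\ref{th:CLT-exp}.)
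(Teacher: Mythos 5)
First, a point of context: the paper does not prove this statement at all --- it is quoted verbatim from Petrov's book (Lemma V.2.1, p.~109) and used as a black box, so there is no internal proof to compare against, and for the purposes of this paper a citation is the intended ``proof''. Your sketch follows the standard Fourier-analytic skeleton behind such estimates (factorize $\varphi_{S_n}$, third-order Taylor expansion of each factor using $\E X_j=0$ and $\E|X_j|^3<\infty$, telescoping product), which is indeed the right family of ideas.

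However, as a proof it has genuine gaps, and they sit exactly at the points you yourself flag as ``the technical heart''. (i) The per-factor bound $|f_j(t/\sqrt{B_n})|\le e^{-\kappa a_j t^2}$ is simply false in the relevant range: for a symmetric two-point summand $X_j=\pm c$ one has $f_j(u)=\cos(cu)$, whose modulus returns to $1$ at $c|u|=\pi$, and one can arrange (a ``spiky'' summand with $a_j$ of order, say, $n^{-1/3}$ and minimal third moment, the remaining variance spread over many small summands) that $\sqrt{a_j}\,|t|$ exceeds $\pi$ while still $|t|\le 1/(4L_n)$. So the whole weight of the argument falls on the counting/compensation step for bad indices, which you assert but do not carry out; showing that the good indices always retain a fixed fraction of $\sum_j a_j=1$ and that this survives the telescoping with explicit constants is precisely the nontrivial content. (ii) The treatment of the outer regime is circular: you claim that for $|t|\gtrsim L_n^{-1/3}$ it is ``immediate'' that $|\varphi_{S_n}(t)|$ is bounded by $16L_n|t|^3e^{-t^2/3}$ because both sides are exponentially small, but $16L_n|t|^3e^{-t^2/3}$ is itself exponentially small there (your remark that $L_n|t|^3\sim L_n^{-2}$ only applies at the very top $|t|\asymp L_n^{-1}$, not throughout the matching region), and establishing Gaussian-type decay of $|\varphi_{S_n}(t)|$ on all of $|t|\le 1/(4L_n)$ is essentially the modulus half of the lemma --- it cannot be invoked to prove it. (iii) Relatedly, the quartic term $\tfrac18 t^4\sum_k a_k^2\le\tfrac18 t^4 L_n^{4/3}$ is not $O(L_n|t|^3)$ uniformly on $|t|\le 1/(4L_n)$ (the ratio is $|t|L_n^{1/3}/8\le 1/(32L_n^{2/3})$), so even the leading-order bookkeeping, let alone the constants $16$ and $1/3$, is not actually obtained. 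In short: right skeleton, but the decisive estimates are missing or would fail as stated; for this paper the correct move is simply to cite Petrov, as the authors do. (Your closing remark is correct: Cram\'er's condition \eqref{eq:cramer} plays no role in this lemma and is only used later, e.g.\ in Lemma \ref{lem:bounded-density} and in the proof of Theorem \ref{th:CLT-exp}.)
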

\begin{lemma}[\cite{petrov}, Lemma I.2.1, p. 10]\label{lem:upgrade-characteristic}
For any characteristic function $\varphi$,
\[
  1 - |\varphi(2t)|^2 \le 4(1 - |\varphi(t)|^2)
\]
holds for all $t \in \R$.
\end{lemma}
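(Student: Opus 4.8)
The statement to prove is the elementary inequality $1 - |\varphi(2t)|^2 \le 4(1 - |\varphi(t)|^2)$ for every characteristic function $\varphi$ and all $t \in \R$. The plan is to exploit the fact that $|\varphi|^2$ is itself a characteristic function, namely of the symmetrized random variable $X - X'$, where $X$ has the law corresponding to $\varphi$ and $X'$ is an independent copy. Writing $\psi(t) := |\varphi(t)|^2 = \E\cos\big(t(X-X')\big)$, the inequality becomes $1 - \psi(2t) \le 4(1 - \psi(t))$, i.e.\ $\E\big(1 - \cos(2tY)\big) \le 4\,\E\big(1 - \cos(tY)\big)$ where $Y = X - X'$. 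So it suffices to prove the pointwise trigonometric inequality $1 - \cos(2u) \le 4(1 - \cos u)$ for all real $u$, and then integrate against the law of $Y$.

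For the pointwise inequality, the clean route is the double-angle identity $1 - \cos(2u) = 2\sin^2 u = 2(1-\cos u)(1+\cos u)$, together with $1 - \cos u = 2\sin^2(u/2) \ge 0$. Then $1 - \cos(2u) = 2(1-\cos u)(1 + \cos u) \le 2(1 - \cos u)\cdot 2 = 4(1 - \cos u)$, since $1 + \cos u \le 2$. This disposes of the scalar inequality in one line. Taking expectations over $Y$ (all integrands are bounded, so there is no integrability issue) yields $1 - \psi(2t) \le 4(1 - \psi(t))$, which is exactly the claim after substituting back $\psi = |\varphi|^2$.

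There is essentially no obstacle here; the only thing to be careful about is the justification that $|\varphi|^2$ is the characteristic function of $X - X'$, which is the standard computation $\varphi_{X-X'}(t) = \E e^{it(X-X')} = \E e^{itX}\,\overline{\E e^{itX'}} = \varphi(t)\overline{\varphi(t)} = |\varphi(t)|^2$ using independence. One could alternatively avoid introducing $X'$ altogether: write $\varphi(t) = \E e^{itX}$ directly and expand $|\varphi(t)|^2 = \E\E' \cos\big(t(X - X')\big)$ via the real part, but the symmetrization phrasing is the most transparent. If one prefers a fully self-contained argument without probabilistic language, one can also verify the inequality directly from $1 - |\varphi(2t)|^2 \le 4(1-|\varphi(t)|^2)$ by the algebraic identity for the product of conjugates, but the route above is shortest. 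I would present the two-line version: state $|\varphi|^2 = \varphi_{X-X'}$, record $1 - \cos 2u = 2(1-\cos u)(1+\cos u) \le 4(1 - \cos u)$, and integrate.
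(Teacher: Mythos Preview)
Your proof is correct. The paper does not give its own proof of this lemma; it simply quotes the statement from Petrov's book as a black box, so there is no argument in the paper to compare against. Your route via symmetrization ($|\varphi|^2=\varphi_{X-X'}$) together with the pointwise identity $1-\cos(2u)=2(1-\cos u)(1+\cos u)\le 4(1-\cos u)$ is the standard proof and is essentially what appears in Petrov.
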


\begin{lemma}\label{lem:bounded-density}
Let $(S_n)$ be as in Theorem \ref{th:CLT-exp}. Let $T$ be independent of $(S_n)$ and assume that its characteristic function $\varphi_T$ is Lebesgue integrable.
Then for all $n\ge 1$, the density of $S_n+\frac{T}{n}$ is bounded by a number $C=C(1/\varepsilon,\delta, \nu_3,\|\varphi_T\|_1)$,
which is non-decreasing in each of its parameters.
\end{lemma}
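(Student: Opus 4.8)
The plan is to write the density of $S_n+T/n$ via Fourier inversion and bound it by the $L^1$-norm of its characteristic function. By independence and $S_n=(Y_1+\cdots+Y_n)/\sqrt n$, the characteristic function of $S_n+T/n$ is $t\mapsto \varphi_{Y_1}(t/\sqrt n)^n\,\varphi_T(t/n)$, so everything reduces to estimating
\[
  I_n:=\int_{\R}\big|\varphi_{Y_1}(t/\sqrt n)\big|^n\,\big|\varphi_T(t/n)\big|\,dt .
\]
If $I_n<\infty$ with $I_n\le 2\pi C$ for a quantity $C$ of the required form, then $S_n+T/n$ has a continuous density $p_n$ given by the inversion formula and $\|p_n\|_\infty\le \tfrac1{2\pi}I_n\le C$. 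I would split $\R$ into three frequency ranges, separated by the cut-offs $\sqrt n\,\min(\delta,1/(4\nu_3))$ and $\sqrt n\,\delta$.

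\emph{High frequencies $|t|>\sqrt n\,\delta$.} Then $|t/\sqrt n|>\delta$, so Cram\'er's condition \eqref{eq:cramer} gives $|\varphi_{Y_1}(t/\sqrt n)|^n\le(1-\varepsilon)^n$, and bounding the other factor by $\int_{\R}|\varphi_T(t/n)|\,dt=n\|\varphi_T\|_1$, this range contributes at most $n(1-\varepsilon)^n\|\varphi_T\|_1\le\big(\sup_{m\ge1}m(1-\varepsilon)^m\big)\|\varphi_T\|_1$, a finite quantity, non-decreasing in $1/\varepsilon$ and in $\|\varphi_T\|_1$. This is the only place where the integrability of $\varphi_T$ (i.e. the perturbation $T/n$) enters; without it the integrand need not be integrable at all, since $S_n$ itself may have no density.

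\emph{Low and intermediate frequencies.} For $|t|\le \sqrt n\,\min(\delta,1/(4\nu_3))$ I bound $|\varphi_T(t/n)|\le1$ and apply Lemma \ref{lem:esseen-ineq} with $X_j=Y_j$, so $B_n=n$, $L_n=\nu_3/\sqrt n$, and $|t|\le 1/(4L_n)$ on this range; then $16L_n|t|^3\le 4t^2$, whence $|\varphi_{S_n}(t)|\le e^{-t^2/2}+16L_n|t|^3e^{-t^2/3}\le (1+4t^2)\,e^{-t^2/3}$, which integrates over $\R$ to an absolute constant. The intermediate range $\sqrt n/(4\nu_3)<|t|\le\sqrt n\,\delta$ (empty unless $4\nu_3\delta>1$) is the crux: for $s:=t/\sqrt n$ one has $1/(4\nu_3)<|s|\le\delta$; let $j\ge1$ be least with $2^j|s|>\delta$, so $j\le J:=\lceil\log_2(4\nu_3\delta)\rceil$. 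Iterating Lemma \ref{lem:upgrade-characteristic} in the form $1-|\varphi(u)|^2\ge\tfrac14\big(1-|\varphi(2u)|^2\big)$ exactly $j$ times and then using \eqref{eq:cramer} at frequency $2^j|s|>\delta$ yields $1-|\varphi_{Y_1}(s)|^2\ge 4^{-j}\varepsilon\ge 4^{-J}\varepsilon$, hence $|\varphi_{Y_1}(s)|\le\rho:=\sqrt{1-4^{-J}\varepsilon}<1$ uniformly on this annulus. Therefore $|\varphi_{S_n}(t)|\le\rho^n$, and since the range has Lebesgue measure at most $2\sqrt n\,\delta$, it contributes at most $2\delta\sqrt n\,\rho^n\le 2\delta\big(\sup_{m\ge1}\sqrt m\,\rho^m\big)$, which is finite; since $J$ (hence $\rho$) is non-decreasing in $\delta$ and $\nu_3$, and $\rho$ is non-decreasing in $1/\varepsilon$, this has the claimed monotone dependence.

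Summing the three contributions gives a constant $C=C(1/\varepsilon,\delta,\nu_3,\|\varphi_T\|_1)$ non-decreasing in each argument; in particular $I_n<\infty$, which legitimizes the inversion formula. The main obstacle is the intermediate frequency range, where neither the local Gaussian (Esseen) estimate nor the bare bound $1-\varepsilon$ applies directly to $\varphi_{S_n}$: the doubling inequality of Lemma \ref{lem:upgrade-characteristic} is precisely what upgrades the pointwise smallness of $|\varphi_{Y_1}|$ beyond frequency $\delta$ into a uniform sub-unit bound on the compact annulus $\{1/(4\nu_3)\le|s|\le\delta\}$. Keeping all constants explicit with the correct monotonicity requires a little bookkeeping but no new idea.
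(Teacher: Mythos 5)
Your proposal is correct and follows essentially the same route as the paper: Fourier inversion bounding the density by the $L^1$-norm of $\varphi_{S_n}\varphi_T(\cdot/n)$, Esseen's estimate (Lemma \ref{lem:esseen-ineq}) for $|t|\le \sqrt n/(4\nu_3)$, the doubling inequality (Lemma \ref{lem:upgrade-characteristic}) to upgrade Cram\'er's condition on the annulus between $1/(4\nu_3)$ and $\delta$, and the integrability of $\varphi_T$ (contributing the factor $n\|\varphi_T\|_1$ against a geometrically small bound on $|\varphi_{S_n}|$) to control the unbounded frequency range. The only difference is cosmetic bookkeeping: you split into three ranges and use the raw bound $(1-\varepsilon)^n$ beyond $\sqrt n\,\delta$ while the paper uses the upgraded bound on all of $|t|\ge \sqrt n/(4\nu_3)$; the constants and monotonicity come out the same.
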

\begin{proof}[Proof of Lemma \ref{lem:bounded-density}] 
Since $\varphi_{S_n+T/n}=\varphi_{S_n}\varphi_T(\cdot/n)$ is Lebesgue integrable,
the inversion formula ensures that the density of $S_n + \frac{1}{n} T$ at $x$ equals
\[ \begin{split}
  g_{S_n + \frac{1}{n}T}(x) &= \frac{1}{2\pi} \int_{-\infty}^\infty e^{-itx} \varphi_{S_n}(t) \varphi_T\big(t/n\big) \, dt \\
  &= \frac{1}{2\pi} \int_{-\infty}^\infty e^{-itx} e^{-t^2/2} \varphi_T\big(t/n\big) \, dt + \frac{1}{2\pi} \int_{-\infty}^\infty e^{-itx} (\varphi_{S_n}(t) - e^{-t^2/2}) \varphi_T\big(t/n\big) \, dt \\
  &\le \frac{1}{\sqrt{2\pi}} + \frac{1}{2\pi} \int_{-\infty}^\infty |\varphi_{S_n}(t) - e^{-t^2/2}| \, |\varphi_T\big(t/n\big)| \, dt.
\end{split} \]
To bound the last integral, we  apply Lemma~\ref{lem:esseen-ineq}
with $B_n = n$ and $L_n = \nu_3 n^{-1/2}$. We get
\[ \begin{split}
  g_{S_n + T/n}(x) &\le \frac{1}{\sqrt{2\pi}}
  + \frac{1}{2\pi} \int_{|t| \le \frac{\sqrt{n}}{4\nu_3}} \frac{16 \nu_3}{\sqrt{n}} |t|^3 e^{-t^2/3} \, dt
  + \frac{1}{2\pi} \int_{|t| > \frac{\sqrt{n}}{4\nu_3}} (|\varphi_{S_n}(t)| + e^{-t^2/2}) |\varphi_T\big(t/n\big)| \, dt \\
  &\le \frac{1}{\sqrt{2\pi}} + \frac{72 \nu_3}{\pi \sqrt{n}} +
  \frac{1}{2\pi} \underbrace{\int_{|t| > \frac{\sqrt{n}}{4\nu_3}} |\varphi_{S_n}(t)| \, |\varphi_T\big(t/n\big)| \, dt}_{I} +
  \frac{1}{2\pi} \int_{|t| > \frac{\sqrt{n}}{4\nu_3}} e^{-t^2/2} \, dt.
\end{split} \]
For integral (I) from the last line we use~\eqref{eq:cramer} which implies
\[
  |\varphi_{S_n}(t)| = \Big|\varphi_{{Y}_1}\big(t/\sqrt n\big)^n \Big|\le (1-\varepsilon)^n \quad \textup{for $|t| \ge \delta \sqrt{n}$.}
\]
However, $\delta$ might be larger than $\frac{1}{4\nu_3}$, i.e. $4\nu_3 \delta \ge 1$. If this is so, we use Lemma \ref{lem:upgrade-characteristic} on characteristic functions:
since~\eqref{eq:cramer} implies
\[
  1 - |\varphi_{{Y_i}}(t)|^2 \ge \varepsilon \quad \textup{for $|t| \ge \delta$,}
\]
Lemma \ref{lem:upgrade-characteristic} implies that for any non-negative integer $k$,
\[
  1 - |\varphi_{{Y_i}}(t)|^2 \ge 4^{-k} \varepsilon \quad \textup{for $|t| \ge 2^{-k} \delta$.}
\]
Taking $k = \lceil \log_2(4 \nu_3 \delta) \rceil)$ implies $2^{-k} \delta \le \frac{1}{4 \nu_3}$ and $4^{-k} \ge \frac{1}{(8 \nu_3 \delta)^2}$ and hence
\[
  |\varphi_{{Y_i}}(t)|^2 \le 1 - \frac{\varepsilon}{(8 \nu_3 \delta)^2} \quad \textup{for $|t| \ge \frac{1}{4 \nu_3}$.}
\]
In any case, we obtain that
\begin{equation}\label{eq:cramer-condition-2}
  |\varphi_{S_n}(t)| \le \left(1 - \frac{\varepsilon}{\max(1, (8 \nu_3 \delta)^2)}\right)^{n/2} \quad \textup{for $|t| \ge \frac{\sqrt{n}}{4\nu_3}$.}
\end{equation}
Using the above we estimate the integral (I) as follows. Using the rough estimate
\[
  (1-x)^m = e^{m\log(1-x)} \le e^{-mx}=\frac{1}{e^{mx}}\le \frac{1}{mx}\, ,
\]
valid for any $m > 0$ and $x \in (0,1)$
, we get
\[ \begin{split}
  I &\le \left(1 - \frac{\varepsilon}{\max(1, (8 \nu_3 \delta)^2)}\right)^{n/2} n \int_0^\infty |\varphi_T(u)| \, du \\
  &\le n \frac{2\max(1, (8 \nu_3 \delta)^2)}{n \varepsilon} \|\varphi_T\|_1
  \le 2 \|\varphi_T\|_1\frac{1 + (8\nu_3 \delta)^2}{\varepsilon}.
\end{split} \]
Finally we obtain that the density of $S_n + \frac{1}{n} T$ is bounded by $C_1 + C_2 \nu_3 + C_3 \|\varphi_T\|_1 \frac{1 + (\nu_3 \delta)^2}{\varepsilon}$ for some constants $C_1, C_2, C_3 > 0$.
\end{proof}

Denote by $\phi$ the density of the standard normal distribution on $\R$ and let $\Phi$ be its cumulative distribution function.
Our last two  preliminary statements are easy  consequences of the equality $ e^{\gamma t} \phi(t) = e^{\gamma^2/2} \phi(t - \gamma)$ satisfied by the Gaussian density
\begin{lemma}\label{lem:phi-shifted}
 Let $Z$ be a standard normal random variable. For any Borel set $A \subset \R$,
\begin{equation}\label{eq:normal-shifted}
  \E e^{\gamma Z} \II_{Z \in A} = e^{\gamma^2/2} \Pr(Z \in A - \gamma).
\end{equation}
\end{lemma}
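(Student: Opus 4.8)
The plan is to reduce the identity to the elementary Gaussian relation $e^{\gamma t}\phi(t) = e^{\gamma^2/2}\phi(t-\gamma)$ recorded just above the statement, followed by a translation change of variable.

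First I would express the left-hand side as an integral against the standard Gaussian density,
\[
  \E\, e^{\gamma Z}\II_{Z\in A} = \int_A e^{\gamma t}\phi(t)\,dt ,
\]
which is a finite quantity for every Borel set $A$ since $\int_{\R} e^{\gamma t}\phi(t)\,dt = e^{\gamma^2/2} < \infty$; in particular the expectation is well defined.

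Next I would insert the pointwise identity $e^{\gamma t}\phi(t) = e^{\gamma^2/2}\phi(t-\gamma)$ under the integral sign, factor out the constant $e^{\gamma^2/2}$, and perform the affine substitution $s = t-\gamma$, which carries $A$ onto $A-\gamma$ with unit Jacobian. This gives
\[
  \int_A e^{\gamma t}\phi(t)\,dt = e^{\gamma^2/2}\int_A \phi(t-\gamma)\,dt = e^{\gamma^2/2}\int_{A-\gamma}\phi(s)\,ds = e^{\gamma^2/2}\,\Pr(Z\in A-\gamma),
\]
which is exactly the assertion.

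There is no real obstacle here: the only points worth a line are that the expectation is finite (integrability, as above) and that $A-\gamma$ is again Borel (immediate, since translation is a homeomorphism of $\R$). Equivalently, one could simply observe that the probability measure $e^{-\gamma^2/2}e^{\gamma t}\phi(t)\,dt$ is the law of $Z+\gamma$ and read off the formula, but the direct change of variables is the most economical presentation.
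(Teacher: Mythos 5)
Your proof is correct and follows exactly the route the paper indicates: the authors state that Lemma \ref{lem:phi-shifted} is an easy consequence of the pointwise identity $e^{\gamma t}\phi(t)=e^{\gamma^2/2}\phi(t-\gamma)$ and leave the (trivial) change-of-variables step to the reader, which is precisely what you carry out. No discrepancy.
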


\begin{lemma}\label{lem:exp-gaussian}
For any $s > 0$ and $\alpha \in \R$, and $\lambda$ such $\lambda s-\frac{\alpha }{s}>1$, it holds
\[
 \int_0^\infty \lambda e^{-\lambda x} \frac{1}{\sqrt{2\pi} s} e^{-\frac{(x - \alpha)^2}{2s^2}} \, dx= \frac{1}{\sqrt{2\pi} s} e^{-\frac{\alpha^2}{2s^2}} \left(1 + O\left(\frac{1+\frac{|\alpha|}{s}}{\lambda s- \frac{\alpha}{s}} \right) \right).
\]
In particular if $s$ and $\alpha$ stay bounded in the sense that $s\in [1/S,S]$, $|\alpha|\le A$ holds for some $A,S>0$, then for  $\lambda>2AS^{-2}+S^{-1}$, the last factor simplifies to $1+O_{A,S}\left(\frac{1}{\lambda}\right)$.
\end{lemma}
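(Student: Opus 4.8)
The plan is to compute the Gaussian-weighted exponential integral essentially exactly by completing the square, and then to control the resulting incomplete Gaussian integral. Write $\gamma := \lambda s - \alpha/s$, which by hypothesis exceeds $1$. Completing the square in the exponent,
\[
 -\lambda x - \frac{(x-\alpha)^2}{2s^2} = -\frac{\alpha^2}{2s^2} + \frac{\gamma^2 s^2}{2} \cdot \frac{1}{s^2}\cdot\frac{1}{?} \,,
\]
so I would organize it as: after substituting $x = \alpha + s u$ (so $dx = s\,du$ and the range becomes $u \ge -\alpha/s$), the integral becomes
\[
 e^{-\frac{\alpha^2}{2s^2}}\, \frac{1}{\sqrt{2\pi}} \int_{-\alpha/s}^{\infty} e^{-\lambda s\, u} e^{-u^2/2}\,du
 = e^{-\frac{\alpha^2}{2s^2}} e^{(\lambda s)^2/2} \, \frac{1}{\sqrt{2\pi}} \int_{-\alpha/s}^{\infty} e^{-(u+\lambda s)^2/2}\,du,
\]
and recognizing the remaining integral as $\Pr(Z \ge -\alpha/s + \lambda s) = 1 - \Phi(\gamma)$ via the shift identity behind Lemma \ref{lem:phi-shifted}. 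Thus the left-hand side equals $e^{-\alpha^2/(2s^2)} e^{(\lambda s)^2/2}\,(1-\Phi(\gamma))$; one checks the prefactors are consistent with the claimed $\frac{1}{\sqrt{2\pi}s} e^{-\alpha^2/(2s^2)}$ after noting the stray $s$ from $dx$ cancels against the $1/s$ in the target. (I would double-check the bookkeeping of the $e^{(\lambda s)^2/2}$ factor against $e^{-\alpha^2/(2s^2)}$; these combine precisely because $-\lambda x$ evaluated through the square gives exactly $-\alpha^2/(2s^2) + \gamma^2/2$ shifted, and the $\gamma^2/2$ is absorbed into the tail $1-\Phi(\gamma)$.)

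The heart of the matter is then the asymptotics of $\sqrt{2\pi}\,e^{\gamma^2/2}(1-\Phi(\gamma)) = \sqrt{2\pi}\,e^{\gamma^2/2}\int_\gamma^\infty \phi$ for $\gamma>1$. I would use the standard two-sided Mills ratio bound
\[
 \frac{\gamma}{1+\gamma^2}\,\phi(\gamma) \le 1-\Phi(\gamma) \le \frac{1}{\gamma}\,\phi(\gamma),
\]
which gives $1-\Phi(\gamma) = \frac{\phi(\gamma)}{\gamma}\bigl(1 + O(\gamma^{-2})\bigr)$, hence $\sqrt{2\pi}\,e^{\gamma^2/2}(1-\Phi(\gamma)) = \frac{1}{\gamma}\bigl(1+O(\gamma^{-2})\bigr)$. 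Combining with the exact identity, the left-hand side equals $\frac{1}{\sqrt{2\pi}s}e^{-\alpha^2/(2s^2)} \cdot \frac{1}{\gamma}\bigl(1 + O(\gamma^{-2})\bigr)$. This is not quite the stated form, so the remaining step is to show $\frac{1}{\gamma} = 1 + O\!\bigl(\frac{1+|\alpha|/s}{\gamma}\bigr)$ — wait, that is false for large $\gamma$; instead I realize the correct reading is that the claimed error term $\frac{1+|\alpha|/s}{\lambda s - \alpha/s}$ should be compared after writing $\lambda s = \gamma + \alpha/s$, so that the prefactor $1/\gamma$ that I derived must actually be reconciled with a hidden factor. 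Let me restate: the cleanest route is to keep the answer as $\frac{e^{-\alpha^2/(2s^2)}}{\sqrt{2\pi}s}\cdot\frac{\lambda s}{\gamma^2}\cdot(\ldots)$? No — the exact value of $\int_0^\infty \lambda e^{-\lambda x}\phi((x-\alpha)/s)\,dx/(\sqrt{2\pi}s)^{-1}$ is genuinely $e^{-\alpha^2/(2s^2)}\cdot\sqrt{2\pi}\,e^{\gamma^2/2}(1-\Phi(\gamma))\cdot$(normalization), and the point of the lemma is only that this is $\frac{e^{-\alpha^2/(2s^2)}}{\sqrt{2\pi}s}(1 + \text{small})$; so I must have mis-normalized and the true leading term is $1$, not $1/\gamma$. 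The resolution: there is an extra factor of $\lambda s$ somewhere (from $\lambda\,dx = \lambda s\,du$), and $\lambda s/\gamma = 1 + (\alpha/s)/\gamma = 1 + O\bigl(\frac{|\alpha|/s}{\lambda s - \alpha/s}\bigr)$, which is exactly the claimed error shape.

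So the corrected skeleton is: (i) substitute and complete the square to get the exact identity LHS $= \frac{e^{-\alpha^2/(2s^2)}}{\sqrt{2\pi}\,s}\cdot \lambda s\cdot \sqrt{2\pi}\,e^{\gamma^2/2}(1-\Phi(\gamma))$ with $\gamma = \lambda s - \alpha/s$; (ii) apply the Mills ratio bounds to get $\sqrt{2\pi}\,e^{\gamma^2/2}(1-\Phi(\gamma)) = \frac{1}{\gamma}\bigl(1 + O(\gamma^{-2})\bigr)$, valid precisely because $\gamma>1$; (iii) multiply: $\lambda s \cdot \frac{1}{\gamma} = \frac{\gamma + \alpha/s}{\gamma} = 1 + \frac{\alpha/s}{\gamma}$, and fold $\frac{\alpha/s}{\gamma}$, $\gamma^{-2}$, and their product into a single term $O\bigl(\frac{1+|\alpha|/s}{\lambda s-\alpha/s}\bigr)$ (using $\gamma^{-2}\le \gamma^{-1}\le \frac{1+|\alpha|/s}{\gamma}$ since $\gamma>1$). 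The ``in particular'' clause then follows by noting that under $s\in[1/S,S]$, $|\alpha|\le A$, and $\lambda > 2AS^{-2}+S^{-1}$ one has $\gamma = \lambda s - \alpha/s \ge \lambda/S - AS \ge$ a constant multiple of $\lambda$, and $1+|\alpha|/s \le 1 + AS$ is bounded, so the error is $O_{A,S}(1/\lambda)$. The main obstacle is purely bookkeeping — tracking the $\lambda s$ versus $\gamma$ discrepancy and the $e^{\gamma^2/2}$ cancellation correctly — since no deep fact beyond the elementary Mills ratio inequality is needed; I would be careful to verify the sign conventions in the shift identity and that $\gamma>1$ indeed makes all the Mills bounds and the $O(\gamma^{-1})$ collapse legitimate in a way that is continuous and monotone in $|\alpha|$, as the surrounding results require.
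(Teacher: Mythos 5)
Your corrected skeleton is precisely the paper's proof: the paper also rewrites the integral via the Gaussian shift identity (Lemma~\ref{lem:phi-shifted}) to obtain $\lambda e^{-\alpha^2/(2s^2)} e^{\gamma^2/2}(1-\Phi(\gamma))$ with $\gamma = \lambda s - \alpha/s$, applies the Mills ratio bound $\frac1t \ge \sqrt{2\pi}\,e^{t^2/2}(1-\Phi(t)) \ge \frac{1}{\sqrt{t^2+2}}$ to get $\sqrt{2\pi}\,\gamma e^{\gamma^2/2}(1-\Phi(\gamma)) = 1+O(\gamma^{-2})$, and then writes $\lambda s/\gamma = 1 + (\alpha/s)/\gamma$ to assemble the stated error term. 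Aside from the mid-write-up detour (which you caught and fixed) and a slightly different but equivalent form of the Mills bound, this matches the paper step for step.
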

\begin{proof}
Using a standard Gaussian random variable $Z$, we  rewrite the left-hand side as
\[ \begin{split}
  \mathcal T:=\int_0^\infty \lambda e^{-\lambda x} \frac{1}{\sqrt{2\pi} s} e^{-\frac{(x - \alpha)^2}{2s^2}} \, dx
  &= \lambda \E e^{-\lambda (s Z + \alpha)} \II_{Z > -\frac{\alpha}{s}} = \lambda e^{-\lambda \alpha} e^{\lambda^2 s^2/2} \Pr\Big(Z > \lambda s - \frac{\alpha}{s}\Big) \\
  &= \lambda e^{-\frac{\alpha^2}{2s^2}} e^{(\lambda s - \frac{\alpha}{s})^2/2} \Big(1 - \Phi\big(\lambda s - \frac{\alpha}{s}\big)\Big) 
\end{split} \]
where the second equality follows from~\eqref{eq:normal-shifted}.
Next we use the classical bound, for $t>0$,
\[ \frac1t \ge \sqrt{2\pi} \, e^{t^2/2} \big( 1-\Phi(t)\big)\ge \frac{1}{\sqrt{t^2+2}},\]
which implies that for $t>1$, $\sqrt{2\pi} \, t e^{t^2/2} \big( 1-\Phi(t)\big)=1+O(1/t^2)$. When
$\lambda s-\frac{\alpha}{s}>1$ we obtain that 
\[
\begin{split}
\sqrt{2\pi} s \, e^{\frac{\alpha^2}{2s^2}}\, \mathcal T&= 
\frac{\lambda s}{ \lambda s-\frac{\alpha}{s}} \left(1+O\Big(\frac{1}{(\lambda s- \frac{\alpha}{s}) ^2} \Big) \right)
\\
&=\left( 1+\frac{\frac{\alpha}{s}}{\lambda s-\frac{\alpha}{s}}\right) \cdot \left(1+O\Big(\frac{1}{(\lambda s- \frac{\alpha}{s}) ^2} \Big) \right)= 1+O\left(\frac{1+\frac{|\alpha|}{s}}{\lambda s- \frac{\alpha}{s}} \right) .
\end{split}
\]
The case when $\alpha$ and $s$ are bounded readily follows.
\end{proof}

\section{Proof of Theorem \ref{th:CLT-exp}}
Our aim is to show that for any $\alpha \in \R$, $\mathcal I =\mathcal J\times (1+O(n^{-1/2}))$
where 
\[
  \mathcal I = \E e^{\ell \sqrt{n} S_n} \II_{\{S_n \le \alpha\}} \quad\mathrm{and}\quad
   \mathcal J= \frac{1}{\ell \sqrt{2\pi n}} e^{\ell \sqrt{n} \alpha} e^{-\alpha^2/2}.
\]
Let $Z$ be a standard Gaussian random variable, independent of the $Y_i$'s.
The first step is to introduce the modified quantity 
\[
  \mathcal I_2 = \E e^{\ell \sqrt{n} (S_n + n^{-1} Z)} \II_{\{S_n + n^{-1} Z \le \alpha\}},
\]
and to check that it is enough for our purpose to establish $\mathcal I_2 =\mathcal J\times (1+O(n^{-1/2}))$. In order to do so we estimate the difference between $\mathcal I$ and $\mathcal I_2$.

By the triangle inequality:
\[ \begin{split}
  |\mathcal I_2 - \mathcal I| &\le \E e^{\ell \sqrt{n} S_n} |e^{\ell n^{-1/2} Z} - 1| \II_{\{S_n \le \alpha\}}
  + \E e^{\ell \sqrt{n} (S_n + n^{-1} Z)} \big|\II_{\{  S_n + n^{-1} Z\le \alpha  \}} - \II_{\{  S_n \le \alpha \}} \big|\\
  &=\mathcal I_3+\mathcal I_4+\mathcal I_5,
\end{split} \]  
where 
\begin{align*}
 \mathcal{I}_3&= \E e^{\ell \sqrt{n} S_n} |e^{\ell n^{-1/2} Z} - 1| \II_{\{S_n \le \alpha\}}\\
 \mathcal{I}_4&= \E e^{\ell \sqrt{n} (S_n + n^{-1} Z)} \II_{\{  \alpha< S_n \le \alpha- n^{-1}Z \}}\\
 \mathcal{I}_5&= \E e^{\ell \sqrt{n} (S_n + n^{-1} Z)} \II_{\{  \alpha-n^{-1}Z< S_n \le \alpha \}}.
\end{align*}

By independence $  \mathcal I_3 = \mathcal I \cdot \E |e^{\ell n^{-1/2} Z} - 1| $. Next, we use 
that for $t\in[0,1]$,
\[ \E |e^{t Z} - 1| \le \sqrt{\E \big(e^{2tZ}-2e^{tZ}+1 \big) }= \sqrt{e^{2t^2}-2e^{t^2/2}+1}\le 3t.\]
Thus, under the hypothesis $n\ge 16 \ell^2$ we obtain that $\mathcal I_3\le  \frac{3\ell}{ \sqrt{n}}\mathcal I \le 3 \mathcal I/4$.

For the term $\mathcal I_4$, we introduce $T = U + U'$ where $U$ and $U'$ are independent random variables uniformly distributed in $(-1,1)$ and note that $\varphi_T(u)= (\sin(u)/u)^2$ is Lebesgue integrable. Since $|T|\le 2$ a.s., 
\[ \begin{split}
  \mathcal I_4 &\le e^{\ell \sqrt{n} \alpha} \int_0^\infty \Pr(\alpha < S_n \le \alpha + n^{-1} x) \phi(x) \, dx \\
  &\le e^{\ell \sqrt{n} \alpha} \int_0^\infty \Pr(\alpha - 2/n < S_n + T/n \le \alpha + (x+2)/n) \phi(x) \, dx.
\end{split} \]
By Lemma~\ref{lem:bounded-density}, $S_n + T/n$ has a density which is bounded by a constant, say $C > 0$. Then
\[ \begin{split}
  \mathcal I_4 & \le e^{\ell \sqrt{n} \alpha} \int_0^\infty C\frac{x+4}{n} \phi(x) \, dx = \frac{C}{n} e^{\ell \sqrt{n} \alpha} (\pi^{-1/2} + 2)\\
  &=  \frac{C}{\sqrt n} \cdot  \ell \sqrt{2\pi } \, e^{\alpha^2/2} \mathcal J \cdot (\pi^{-1/2} + 2) = \mathcal J \cdot O(n^{-1/2}).
\end{split}
\]

The term $\mathcal I_5$ is estimated in a similar way:
\[ \begin{split}
  \mathcal I_5 &\le e^{\ell \sqrt{n} \alpha} \int_0^\infty e^{\ell xn^{-1/2}}\Pr(\alpha-x/n < S_n \le \alpha) \phi(x) \, dx \\
  &\le e^{\ell \sqrt{n} \alpha} \int_0^\infty e^{\ell xn^{-1/2}}\Pr(\alpha-(x+2)/n < S_n+T/n \le \alpha+2/n) \phi(x) \, dx \\
   &\le e^{\ell \sqrt{n} \alpha} \int_0^\infty e^{\ell x}C \frac{x+4}{n} \phi(x) \, dx = \mathcal J \cdot O(n^{-1/2}). 
\end{split} \]
This concludes the first step of the proof, which guarantees that for $n\ge 16 \ell^2$
\begin{equation}\label{pf:step1}
|\mathcal I_2-\mathcal{I}|\le \frac{3\ell}{\sqrt n} \, \mathcal I+ O\Big(\frac{1}{\sqrt n}\Big)\mathcal J.	
\end{equation} 

Our next task is to prove that $\mathcal I_2 =\mathcal J\times (1+O(n^{-1/2}))$. We use the Fourier transform approach. 
It relies on the  Parseval formula, which  ensures that whenever random variables $V$ and $W$ have square integrable densities $g_V$ and $g_W$, their characteristic functions are also square integrable and the following relation holds:
\begin{equation}\label{eq:parseval-formula}
  \int_{-\infty}^\infty g_V(x) g_W(x) \, dx = \frac{1}{2\pi} \int_{-\infty}^\infty \varphi_V(t) \overline{\varphi_W(t)} \, dt.
\end{equation}

 Given $n$, set $W = \alpha - (S_n + \frac{1}{n}Z)$. Then
\begin{align*}
\mathcal I_2 &=\E e^{\ell \sqrt{n} (S_n + n^{-1} Z)} \II_{\{S_n + n^{-1} Z \le \alpha\}}=  e^{\ell \sqrt{n} \alpha} \E e^{-\ell \sqrt{n} W} \II_{W \ge 0}\\
  &= \frac{e^{\ell \sqrt{n} \alpha}}{\ell \sqrt{n}} \int_0^\infty \ell \sqrt{n} e^{-\ell \sqrt{n} x} \, d\mathbb P_W(x).
\end{align*}
Let $V$ a random variable having exponential distribution with parameter $\ell \sqrt{n}$. We have proved that 
\[ \widetilde{\mathcal I}_2:= \ell\sqrt{n}e^{-\ell \sqrt{n} \alpha} \mathcal{I}_2=\int g_V(x) \, d\mathbb P_W(x).  \]
Observe that our goal is to establish that $\widetilde{\mathcal I}_2=\frac{1}{\sqrt{2\pi}}e^{-\alpha^2/2}(1+O(n^{-1/2}))$. 

Since $\mathbb P_W$ is given by the convolution of a probability measure and of the bounded density of $Z/n$, it is absolutely continuous with bounded  (and thus square-integrable) density.
Hence, we may apply the Parseval formula ~\eqref{eq:parseval-formula}  to $V$ and $W$. Since 
$\varphi_W(t) = e^{i\alpha t} \overline{\varphi_{S_n}(t)} e^{-t^2/(2n^2)}$,  we obtain

 \[ \widetilde{\mathcal I}_2 = \frac{1}{2\pi} \int_{-\infty}^\infty \frac{1}{1-\frac{it}{\ell \sqrt{n}}} e^{-i\alpha t} \varphi_{S_n}(t) e^{-t^2/(2n^2)} \, dt =\frac{\mathcal M+ \mathcal E }{2\pi},
\]  
where
 \begin{align*} 
  \mathcal M &=  \int_{-\infty}^\infty \frac{e^{-i\alpha t}}{1-\frac{it}{\ell \sqrt{n}}} e^{-t^2/2} e^{-t^2/(2n^2)} \, dt \\
  \mathcal E &= \int_{-\infty}^\infty \frac{e^{-i\alpha t}}{1-\frac{it}{\ell \sqrt{n}}} (\varphi_{S_n}(t) - e^{-t^2/2}) e^{-t^2/(2n^2)} \, dt.
\end{align*}
Applying Parseval's formula as before, but replacing $S_n$ with and independent standard Gaussian variable $G$ yields $\mathcal{M}/(2\pi)=\int g_Vd\mathbb{P}_{\widetilde W}$ where
$\widetilde W=\alpha-(G+Z/n)$ has $\mathcal{N}(\alpha, 1+n^{-2})$ distribution. Therefore
\[ \frac{\mathcal M}{2\pi}= \int_{-\infty}^\infty \ell \sqrt{n} e^{-\ell \sqrt{n}} \frac{e^{-\frac{(x-\alpha)^2}{2(1+n^{-2})}}}{\sqrt{2\pi (1+n^{-2})}} dx.\]
Lemma~\ref{lem:exp-gaussian} with $\lambda:=\ell \sqrt{n}$ and $s^2:=1 + n^{-2}$ yields, provided $\ell \sqrt{n}\ge 2|\alpha|+1$, 
\[ \begin{split}
  \frac{\mathcal M}{2\pi} &= \frac{1}{\sqrt{2\pi(1 + n^{-2})}} e^{-\frac{\alpha^2}{2(1+n^{-2})}}(1 + O(n^{-1/2}))  \\
  &= \frac{1}{\sqrt{2\pi}} e^{-\frac{\alpha^2}{2}}(1 + O(n^{-1/2})) .
\end{split} \]
It remains to bound the error term:
\[ \begin{split}
  |\mathcal E| &= \left| \int_{-\infty}^\infty \frac{e^{-i\alpha t}}{1-\frac{it}{\ell \sqrt{n}}} (\varphi_{S_n}(t) - e^{-t^2/2}) e^{-t^2/(2n^2)} \, dt  \right| \\
  &\le \int_{-\infty}^\infty \big|\varphi_{S_n}(t) - e^{-t^2/2}\big| \, e^{-t^2/(2n^2)} \, dt \\
  &\le \int_{|t| \le \sqrt{n}/(4\nu_3)} 16 \nu_3 n^{-1/2} |t|^3 e^{-t^2/3} \, dt \\
  &+ \int_{|t| > \sqrt{n}/(4\nu_3)} \big|\varphi_{S_n}(t)\big|\,  e^{-t^2/(2n^2)} \, dt + \int_{|t| > \sqrt{n}/(4\nu_3)} e^{-t^2/2} \, dt \\
  &\le C\nu_3 n^{-1/2} + I + II,
\end{split} \]
where the second inequality follows from Lemma~\ref{lem:esseen-ineq}. The estimate of the term $II$ is immediate:
\[
  II \le 2 e^{-n/(32 \nu_3^2)}.
\]
In order to estimate $I$, we use~\eqref{eq:cramer-condition-2} and a variant of its previous
application using the bound $(1-x)^m\le 1/e^{mx}\le 2/(mx)^2$ for $x\in (0,1)$:
\[
  I \le \left(1 - \frac{\varepsilon}{\max(1, (8 \nu_3 \delta)^2)}\right)^{n/2} n \sqrt{2\pi} = O_{\nu_3, \frac1\varepsilon, \delta}(n^{-1/2}).
\]
Hence $\mathcal E =O(n^{-1/2})=e^{-\alpha^2/2}O(e^{\alpha^2/2}n^{-1/2})=e^{-\alpha^2/2}O_{|\alpha|}(n^{-1/2})$.
This ends the proof of the second step, asserting $\mathcal I_2 =\mathcal J\times (1+O(n^{-1/2}))$. 
Combining the latter with \eqref{pf:step1} yields the claim of the theorem.


\section{Application to spectral gaps}\label{sec:gap}
 
 Our volume asymptotics for Orlicz balls allow to complement a result of Kolesnikov and Milman \cite{kolesnikov-milman}
 about a famous conjecture by Kannan, Lov\'asz and Simonovits, which predicts the approximate value of the Poincar\'e constants of convex bodies (a.k.a. inverse spectral gap of the Neumann Laplacian). More precisely if $\mu$ is a probability measure on some Euclidean space, one denotes by $C_P(\mu)$ (resp. $C_P^{Lin}(\mu)$)
the smallest constant $C$ such that for all locally Lipschitz (resp. linear) functions $f$, it holds
\[ \mathrm{Var}_\mu(f)\le C \int |\nabla f|^2 d\mu.\]
Obviously  $C_P^{Lin}(\mu)\le C_P(\mu)$, and the KLS conjecture predicts the existence of a universal constant $c$ such that for any dimension $n$ and any convex body $K\subset \mathbb R^n$, 
\[C_P(\lambda_K)\le c\,  C_P^{Lin}(\lambda_K),\]
where $\lambda_K$ stands for the uniform probability measure on $K$. The conjecture turned out to be central in the understanding in high-dimension volume distributions of convex sets.
We refer to e.g. \cite{abBOOK, greekBOOK,kolesnikov-milman,lee-vempala,chen} for more background and references, and to \cite{kl} for a recent breakthrough. Kolesnikov and Milman have verified the conjecture for some Orlicz balls. We state next a simplified version of their full result on generalized Orlicz balls. Part of the simplification is unessential, as it amounts to reduce by dilation and translations to a convenient setting. A more significant simplification, compared to their work, is that we consider balls where all coordinates play the same role. 

\begin{theorem}[\cite{kolesnikov-milman}]
Let $V:\R\to \R^+$ be a convex function with $V(0)=0$ and such that $d\mu(x)=e^{-V(x)}dx$
is a probability measure.  We also assume that the function $x\mapsto x V'(x)$, defined almost everywhere, belongs to the space $L^2(\mu)$. For each dimension $n\ge 1$, let 
\[ \mathrm{Level}_n(V):=\left\{ E\ge 0; \; e^{-E} \mathrm{Vol}_n\big(B^n_{V/E}\big) \ge \frac{1}{e}\, \frac{n^n e^{-n}}{n!}\right\}.\]
Then there exists a constant $c$, which depends only on $V$ (through $\|xV'(X)\|_{L^2(\mu)}$)
such that for all $E\in \mathrm{Level}_n(V)$,
\[ C_P(\lambda_{B^n_{V/E}})\le c\,  C_P^{Lin}(\lambda_{B^n_{V/E}}).\]
Moreover, $\mathrm{Level}_n(V)$ is an interval of length at most $e \frac{n! e^{n}}{n^n}= e\sqrt{2\pi n}(1+o(1))$ as $n\to \infty$, and 
\[ 1+ n \int_{\R} V(x) e^{-V(x)} dx\in \mathrm{Level}_n(V).\]
\end{theorem}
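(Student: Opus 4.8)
The plan is to obtain the spectral gap inequality directly from the theorem of Kolesnikov and Milman \cite{kolesnikov-milman}, and then to prove the three ``moreover'' assertions by combining elementary convexity and integration properties of the function $E\mapsto e^{-E}\Vol(B^n_{V/E})$ with the volume asymptotics of Theorem~\ref{th:volume-asymptotic}.

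First I would record the reduction to \cite{kolesnikov-milman}: the spectral gap inequality is the special case $V_1=\cdots=V_n=V$ of their result on generalized Orlicz balls, and the normalizations $V(0)=0$ and $\int_\R e^{-V}=1$ are harmless, since translating $V$ (so that its minimum $0$ sits at the origin) and dilating it (so that $\int_\R e^{-V}=1$) only subject $B^n_{V/E}$ to a translation and a dilation, under which $C_P$ and $C_P^{Lin}$ scale by the same factor, while $\|xV'(X)\|_{L^2(\mu)}$ changes in a controlled way. The constant $c$ and its dependence are then read off from their argument.

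For the ``moreover'' statements, set $h_n(E):=e^{-E}\Vol(B^n_{V/E})$ and $c_n:=\frac1e\frac{n^ne^{-n}}{n!}$, so that $\mathrm{Level}_n(V)=\{E\ge0:h_n(E)\ge c_n\}$. Since $(x,E)\mapsto\sum_iV(x_i)-E$ is convex, the set $\{(x,E):\sum_iV(x_i)\le E\}$ is convex, so the Brunn--Minkowski inequality makes $E\mapsto\Vol(B^n_{V/E})$ log-concave on $[0,\infty)$; multiplying by $e^{-E}$ keeps log-concavity, and since $h_n$ is continuous with $h_n(0)=0$ and $h_n(E)=O(E^ne^{-E})$, it is unimodal, so $\mathrm{Level}_n(V)$ is a closed bounded interval. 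For its length, Fubini's theorem together with $\int_\R e^{-V}=1$ gives $\int_0^\infty h_n(E)\,dE=\int_{\R^n}e^{-\sum_iV(x_i)}\,dx=1$; since $h_n\ge c_n$ throughout the interval $\mathrm{Level}_n(V)$, its length is at most $1/c_n=e\frac{n!e^n}{n^n}$, which equals $e\sqrt{2\pi n}(1+o(1))$ by Stirling's formula.

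The remaining point, $1+nm\in\mathrm{Level}_n(V)$ with $m=\int_\R Ve^{-V}$, is the one I expect to be delicate. The useful observation is that, again by Fubini (using $\int_a^\infty Ee^{-E}\,dE=(1+a)e^{-a}$ and $\int_{\R^n}\sum_iV(x_i)\,e^{-\sum_iV(x_i)}\,dx=nm$), one has $\int_0^\infty E\,h_n(E)\,dE=1+nm$: the target value is exactly the barycenter of the probability density $h_n$. Applying Theorem~\ref{th:volume-asymptotic} with $\Psi=V$, $\lambda=1$ (so $Z_1=1$, $m_1=m$, $\sigma_1^2=\Var(V(X))$) and $\alpha=1/(\sigma_1\sqrt n)$ gives $h_n(1+nm)=\frac{1}{\sigma_1\sqrt{2\pi n}}(1+O(n^{-1/2}))$, while $c_n=\frac{1}{e\sqrt{2\pi n}}(1+O(n^{-1}))$; and since $\sigma_1^2\le2<e^2$ --- because $V(X)$ is stochastically dominated by an $\mathrm{Exp}(1)$ variable (its density equals $e^{-y}$ times a nonincreasing function that integrates against $e^{-y}$ to $1$, forcing $\E V(X)\le1$ and $\E V(X)^2\le2$) --- this yields $h_n(1+nm)>c_n$ for all $n$ beyond a $V$-dependent threshold. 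To cover every $n\ge1$ one must argue non-asymptotically: either invoke the corresponding bound in \cite{kolesnikov-milman}, or use the log-concavity of $h_n$ and the fact that a one-dimensional log-concave density attains at its barycenter a value $\ge e^{-1}\|h_n\|_\infty$ (Fradelizi), which reduces matters to the scale-free inequality $\sup_E e^{-E}\Vol(B^n_{V/E})\ge n^ne^{-n}/n!$ --- sharp for $V(x)=2|x|$, where $h_n$ is the $\mathrm{Gamma}(n+1,1)$ density with maximum $n^ne^{-n}/n!$ --- which I would try to establish by a Brunn--Minkowski/rearrangement comparison of $B^n_{V/E}$ with a suitable $\ell_1$-ball.
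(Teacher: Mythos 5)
This theorem is quoted by the paper from \cite{kolesnikov-milman}; the paper itself contains no proof of it, so there is no internal argument to compare yours with, and deferring the spectral-gap inequality to that reference (after the translation/dilation normalization) is exactly what the paper does. Your derivation of the remaining assertions is essentially correct and self-contained: convexity of $\{(x,E):\sum_i V(x_i)\le E\}$ plus Brunn--Minkowski make $h_n(E)=e^{-E}\Vol(B^n_{V/E})$ log-concave, so $\mathrm{Level}_n(V)$ is an interval; Fubini with $\int_\R e^{-V}=1$ gives $\int_0^\infty h_n=1$, hence length at most $1/c_n=e\,n!\,e^n/n^n=e\sqrt{2\pi n}(1+o(1))$; and the identity $\int_0^\infty E\,h_n(E)\,dE=1+nm$ together with Fradelizi's one-dimensional inequality $h_n(\textup{barycenter})\ge e^{-1}\sup h_n$ correctly reduces the membership $1+nm\in\mathrm{Level}_n(V)$ to the sharp inequality $\sup_E e^{-E}\Vol(B^n_{V/E})\ge n^n e^{-n}/n!$. (Two harmless slips: $h_n(0)$ need not vanish if $V$ vanishes on an interval, and the asymptotic fallback via Theorem \ref{th:volume-asymptotic} assumes $V$ vanishes only at $0$ --- a hypothesis this statement does not make --- besides covering only large $n$; your bound $\Var(V(X))\le 2$ is fine, though the paper elsewhere uses the Nguyen--Wang bound $\le 1$.)

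The genuine gap is that this last inequality, which is the only non-soft ingredient and is needed for every $n\ge1$, is announced rather than proved: ``a Brunn--Minkowski/rearrangement comparison with a suitable $\ell_1$-ball'' is a plan, not an argument. The inequality is true and your plan can be completed without any rearrangement, by a tangent-line argument. By Brunn--Minkowski, $\phi(E):=\Vol(B^n_{V/E})^{1/n}$ is concave and nondecreasing, and $e^{-E}\phi(E)^n$ attains its maximum at some finite $E^*$ (it tends to $0$ at infinity since $\phi$ grows at most linearly). The first-order condition at $E^*$ shows that $c:=\phi(E^*)/n$ is a supergradient of $\phi$ there (it lies between the one-sided derivatives, or is at least $\phi'_+(0)$ if $E^*=0$), so $\phi(E)\le \phi(E^*)+c(E-E^*)=c\,(E-E^*+n)$ for all $E\ge0$; evaluating at $E=0$ and using $\phi(0)\ge0$ gives $E^*\le n$, and therefore
\[ 1=\int_0^\infty e^{-E}\phi(E)^n\,dE\ \le\ c^n\int_{E^*-n}^{\infty}e^{-E}\,(E-E^*+n)^n\,dE\ =\ c^n\,e^{\,n-E^*}\,n!\,, \]
that is $e^{-E^*}\Vol(B^n_{V/E^*})\ge n^n e^{-n}/n!$, with equality for $V(x)=2|x|$ as you observed. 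With this step supplied (or with the corresponding fact simply quoted from \cite{kolesnikov-milman}), your proof of the ``moreover'' part is complete; as written, it is not.
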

We can prove more about the set $ \mathrm{Level}_n(V)$ and in particular we show that its length is of order $\sqrt n$:
\begin{prop}
Let $V:\R\to \R^+$ be a Young function 
such that $d\mu(x)=e^{-V(x)}dx$
is a probability measure. Let $m_1=\int V e^{-V}$ be the average of $V$ with respect to $\mu$, and $\sigma_1^2$ its variance. 
For every $\varepsilon\in (0,1)$ there exists an integer $n_0=n_0(V,\varepsilon)$ depending on $V$ such that 
for all $n\ge n_0$,
\[    \left[m_1n-\sigma_1 (1-\varepsilon) \sqrt{2n}\, ;\, m_1n+\sigma_1 (1-\varepsilon) \sqrt{2n}\right] \subset  \mathrm{Level}_n(V).\]
\end{prop}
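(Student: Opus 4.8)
The plan is to read the inclusion off the sharp volume asymptotics of Theorem~\ref{th:volume-asymptotic}, the one substantial new ingredient being an a priori bound on the fluctuation parameter, namely $\sigma_1\le 1$ for every Young function $V$ with $\int e^{-V}=1$. Concretely, I apply Theorem~\ref{th:volume-asymptotic} with $\Psi=V$ and $\lambda=1$, so that $Z_1=\int e^{-V}=1$, $m_{\lambda=1}=m_1$ and $\sigma_{\lambda=1}=\sigma_1$. Writing $E=m_1 n+\alpha\,\sigma_1\sqrt n$, the value $E$ runs over the asserted interval exactly when $|\alpha|\le(1-\varepsilon)\sqrt2$, and the theorem gives
\[
 e^{-E}\,\mathrm{Vol}_n\bigl(B^n_{V/E}\bigr)=\frac{e^{-\alpha^2/2}}{\sigma_1\sqrt{2\pi n}}\,\bigl(1+O(n^{-1/2})\bigr),
\]
where, by the uniformity in the Remark after Theorem~\ref{th:CLT-exp}, the error is at most $C(V)\,n^{-1/2}$ in absolute value, uniformly for $|\alpha|\le\sqrt2$, once $n\ge n_1(V)$. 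Since Stirling's inequality gives $n^n e^{-n}/n!\le(2\pi n)^{-1/2}$, membership $E\in\mathrm{Level}_n(V)$ holds as soon as $e^{\,1-\alpha^2/2}\bigl(1-C(V)n^{-1/2}\bigr)\ge\sigma_1$. Now $|\alpha|\le(1-\varepsilon)\sqrt2$ forces $1-\alpha^2/2\ge 1-(1-\varepsilon)^2=\varepsilon(2-\varepsilon)\ge\varepsilon$, so, granting $\sigma_1\le1$, it suffices that $e^{\varepsilon}\bigl(1-C(V)n^{-1/2}\bigr)\ge1$, i.e. $n\ge C(V)^2(1-e^{-\varepsilon})^{-2}$. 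Taking $n_0(V,\varepsilon)$ to be an integer exceeding both this quantity and $n_1(V)$, and large enough that moreover $E>0$ on the whole interval, proves the inclusion — conditionally on the bound $\sigma_1\le1$.

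Thus the whole Proposition reduces to the claim that $\sigma_1\le1$ for any Young function $V$ with $\int e^{-V}=1$, and this is the part requiring real work. First I would record the distribution of $V(X)$ for $X\sim\mu:=e^{-V}\,dx$. Since $V$ is convex, vanishes only at $0$, and has $\int e^{-V}<\infty$ (forcing $V(\pm\infty)=\infty$), it restricts to a strictly increasing continuous bijection of $[0,\infty)$ onto $[0,\infty)$ whose inverse $b$ is concave increasing with $b(0)=0$, and on $(-\infty,0]$ the inverse is $-a$ for a concave increasing $a$ with $a(0)=0$. Hence $\{V\le t\}=[-a(t),b(t)]$, so the distribution function of $V(X)$ is $F(t)=\mu([-a(t),b(t)])$ and $F'(t)=e^{-t}\bigl(a'(t)+b'(t)\bigr)$; that is, $V(X)$ has density $e^{-t}\rho(t)$ on $(0,\infty)$, where $\rho:=a'+b'$ is \emph{non-increasing} (a sum of derivatives of concave functions) and $\int_0^\infty e^{-t}\rho(t)\,dt=1$.

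The bound then follows from a short computation exploiting that the antiderivative $-t^2 e^{-t}$ of $(t^2-2t)e^{-t}$ is non-positive and vanishes at $0$ and at $\infty$: for every $s\in(0,\infty]$ one has $\int_0^s(t^2-2t)e^{-t}\,dt=-s^2 e^{-s}\le0$. Writing the non-increasing $\rho$ as a superposition $\rho(t)=\int_{(0,\infty]}\II_{\{t<s\}}\,d\kappa(s)$ against a positive measure $\kappa$ and integrating in $t$ first,
\[
 \E\bigl(V(X)^2-2V(X)\bigr)=\int_0^\infty(t^2-2t)e^{-t}\rho(t)\,dt=\int_{(0,\infty]}(-s^2 e^{-s})\,d\kappa(s)\le0 .
\]
Hence $\E V(X)^2\le 2\,\E V(X)=2m_1$, and therefore $\sigma_1^2=\E V(X)^2-m_1^2\le 2m_1-m_1^2=1-(1-m_1)^2\le1$.

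The main obstacle is exactly this inequality $\sigma_1\le1$. It is sharp (equality for $V(x)=2|x|$, for which $V(X)$ is a standard exponential), and the argument genuinely uses it: what makes room for the $O(n^{-1/2})$ error in the first paragraph is precisely the strict gap $\sigma_1\le1<e^{\varepsilon}\le e^{\,1-\alpha^2/2}$, available because $|\alpha|$ stays strictly below $\sqrt2$. Once $\sigma_1\le1$ is in hand, everything else is routine tracking of constants.
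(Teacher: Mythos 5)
Your proposal is correct, and the reduction to the volume asymptotics is exactly the paper's argument: apply Theorem~\ref{th:volume-asymptotic} with $\Psi=V$, $\lambda=1$, $Z_1=1$, parametrize $E=m_1n+\alpha\sigma_1\sqrt n$ with $|\alpha|\le(1-\varepsilon)\sqrt2$, use the uniformity in $\alpha$ of the error term, compare with $\tfrac1e\,n^ne^{-n}/n!\le e^{-1}(2\pi n)^{-1/2}$ via Stirling, and exploit the strict gap coming from $1-\alpha^2/2\ge\varepsilon$ together with $\sigma_1\le1$. Where you genuinely diverge is the key inequality $\sigma_1^2=\Var_\mu(V)\le1$: the paper simply quotes the sharp variance bound of Nguyen and Wang (see \cite{nguyen-phd,wang-phd,nguyen}, with a short proof in \cite{fradelizi-m-w}), which holds for an arbitrary convex potential of a log-concave probability density in any dimension, whereas you prove the one-dimensional Young-function case from scratch: since $\{V\le t\}=[-a(t),b(t)]$ with $a,b$ concave increasing, the law of $V(X)$ is $e^{-t}\rho(t)\,dt$ with $\rho=a'+b'$ non-increasing, and writing $\rho$ as a superposition of indicators $\II_{\{t<s\}}$ and using $\int_0^s(t^2-2t)e^{-t}\,dt=-s^2e^{-s}\le0$ gives $\E V(X)^2\le 2m_1$, hence $\sigma_1^2\le 1-(1-m_1)^2\le1$, with equality for $V(x)=2|x|$ as you note. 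This is a correct and pleasantly elementary self-contained substitute for the citation, exactly in the generality the proposition needs; what the citation buys instead is the full strength of the inequality (general convex $V$, all dimensions) with no work. Two small points to polish in your variance argument, both routine: justify the Fubini interchange by observing $\E V(X)^2<\infty$ (automatic, since a Young function with $\int e^{-V}<\infty$ grows at least linearly, so $V^2e^{-V}\le Ce^{-V/2}$ is integrable), and fix a right-continuous version of the a.e.-defined $\rho$ when writing it as $\rho(t)=\kappa((t,\infty])$, the atom of $\kappa$ at $\infty$ accounting for $\lim_{t\to\infty}\rho(t)$ as in the equality case.
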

\begin{proof}
We apply Theorem \ref{th:volume-asymptotic}, with $\Psi=V$ and $\lambda=1$. With the notation of the theorem $\mu=\mu_1$ and $Z_1=\int e^{-V}=1$.  We choose $E$ of the following form: $E=m_1n+\alpha \sigma_1 \sqrt{n}$ with $|\alpha|\le (1-\varepsilon)\sqrt 2$.
The theorem ensures that 
\[ \Vol\big(B^n_{V/E}\big)=\frac{e^E}{\sigma_1\sqrt{2\pi n}} e^{-\alpha^2/2} \left(1+O\Big(\frac{1}{\sqrt{n}}\Big)\right),\]
where the $O(n^{-1/2})$ is uniform in $\alpha\in [-(1-\varepsilon)\sqrt 2,(1-\varepsilon)\sqrt 2]$. A sharp inequality due to Nguyen and Wang ensures that $\sigma_1^2=\Var_{e^{-V}}(V)\le 1$  (see \cite{nguyen-phd,wang-phd}, \cite{nguyen} and for a short proof \cite{fradelizi-m-w}). Therefore
\[ e^{-E}\Vol\big(B^n_{V/E}\big)\ge \frac{1}{\sqrt{2\pi n}} e^{-(1-\varepsilon)^2} \left(1+O\Big(\frac{1}{\sqrt{n}}\Big)\right),\]
whereas
\[\frac{1}{e} \frac{n^n e^{-n}}{n!}=\frac{e^{-1}}{\sqrt{2\pi n}} (1+o(1)) .\]
Hence for $n$ large enough and for all $\alpha$ in the above interval $e^{-E}\Vol\big(B^n_{V/E}\big)\ge \frac{1}{e} \frac{n^n e^{-n}}{n!}$.
\end{proof}

\begin{corollary}
Let $V:\R\to \R^+$ be a Young function 
such that $d\mu(x)=e^{-V(x)}dx$
is a probability measure. Let   $m_1$ and $\sigma_1^2$ denote the average and the variance of $V$ with respect to $\mu$.  We also assume that the function $x\mapsto x V'(x)$ belongs to the space $L^2(\mu)$. Let $\varepsilon\in (0,1)$. Then there exists $c=c(V,\varepsilon)$ such 
that for all $n\ge 1$ and all $E\in \left[m_1n-\sigma_1 (1-\varepsilon) \sqrt{2n}\, ;\, m_1n+\sigma_1 (1-\varepsilon) \sqrt{2n}\right] $,
\[ C_P(\lambda_{B^n_{V/E}})\le c\,  C_P^{Lin}(\lambda_{B^n_{V/E}}).\]
\end{corollary}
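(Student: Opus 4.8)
The plan is to obtain the statement by combining the preceding Proposition with the theorem of Kolesnikov and Milman in high dimensions, and then to dispose of the remaining finitely many dimensions by a soft, dimension-dependent argument valid for arbitrary convex bodies.

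Fix $\varepsilon\in(0,1)$ and set $I_n:=\big[\,m_1n-\sigma_1(1-\varepsilon)\sqrt{2n}\,,\ m_1n+\sigma_1(1-\varepsilon)\sqrt{2n}\,\big]$. By the Proposition above there is an integer $n_0=n_0(V,\varepsilon)$ such that $I_n\subseteq\mathrm{Level}_n(V)$ for every $n\ge n_0$. Since the hypothesis $xV'(x)\in L^2(\mu)$ is precisely the one required by the theorem of Kolesnikov and Milman \cite{kolesnikov-milman}, that theorem provides a constant $c_1$, depending only on $V$ (through $\|xV'\|_{L^2(\mu)}$) and \emph{not on $n$}, with $C_P(\lambda_{B^n_{V/E}})\le c_1\,C_P^{Lin}(\lambda_{B^n_{V/E}})$ for all $n\ge n_0$ and all $E\in I_n$. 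This settles all but finitely many dimensions.

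For $n\in\{1,\dots,n_0-1\}$ fix $E>0$; then $B^n_{V/E}$ is bounded (each coordinate lies in the compact level set $\{t:V(t)\le E\}$) and contains a neighbourhood of the origin (by continuity of $V$ and $V(0)=0$), so it is a genuine convex body. For any convex body $K\subset\R^n$ the Payne--Weinberger bound gives $C_P(\lambda_K)\le \mathrm{diam}(K)^2/\pi^2$; conversely, if $\theta$ is a unit vector along a diameter of $K$, then the push-forward of $\lambda_K$ under $x\mapsto\langle x,\theta\rangle$ is a $\tfrac1{n-1}$-concave probability measure supported on an interval of length $\mathrm{diam}(K)$, hence has variance at least $c_n\,\mathrm{diam}(K)^2$ for some $c_n>0$ depending only on $n$ (the case $n=1$ being immediate, as then $B^1_{V/E}$ is an interval carrying the uniform measure). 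Testing the definition of $C_P^{Lin}$ against the linear functional $x\mapsto\langle x,\theta\rangle$ gives $C_P^{Lin}(\lambda_K)\ge c_n\,\mathrm{diam}(K)^2\ge c_n\pi^2\,C_P(\lambda_K)$, so $C_P(\lambda_{B^n_{V/E}})\le \frac{1}{c_n\pi^2}\,C_P^{Lin}(\lambda_{B^n_{V/E}})$. Then $c:=\max\big(c_1,\ \max_{1\le n<n_0}\frac{1}{c_n\pi^2}\big)$ works for all $n\ge1$ and all $E>0$ in the stated interval; for $E\le0$, which can occur only in low dimensions, $B^n_{V/E}$ is not a body and there is nothing to prove.

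The argument is mostly bookkeeping --- the content sits in Theorem~\ref{th:volume-asymptotic}, the Proposition, and the quoted result of \cite{kolesnikov-milman}. The one point deserving care is the low-dimensional range: one should not try to bound $C_P/C_P^{Lin}$ uniformly as $E\to0^+$ by a naive compactness argument, since $B^n_{V/E}$ is not a fixed dilate of $B^n_{V/1}$ and its shape may vary with $E$. Using a dimension-dependent bound valid for \emph{every} convex body avoids this; alternatively one can observe that $E\mapsto C_P(\lambda_{B^n_{V/E}})$ and $E\mapsto C_P^{Lin}(\lambda_{B^n_{V/E}})$ are continuous and positive on $(0,\infty)$, bound the ratio on compact subintervals, and handle $E\to0^+$ by the same diameter/variance comparison.
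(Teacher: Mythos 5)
Your proof is correct, and its main structure coincides with the paper's: for $n\ge n_0(V,\varepsilon)$ you combine the preceding Proposition (which places the whole interval inside $\mathrm{Level}_n(V)$) with the Kolesnikov--Milman theorem, exactly as the paper does, and the constant from \cite{kolesnikov-milman} is indeed dimension-free. The only genuine difference is the treatment of the finitely many dimensions $n<n_0$: the paper simply invokes the dimension-dependent bound of Kannan, Lov\'asz and Simonovits \cite{kls}, namely $C_P(\lambda_K)\le \kappa\, n\, C_P^{Lin}(\lambda_K)$ for every convex body $K\subset\R^n$, whereas you re-derive such a bound by hand from Payne--Weinberger ($C_P(\lambda_K)\le \mathrm{diam}(K)^2/\pi^2$) together with a variance lower bound for the marginal along a diameter direction. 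Your route is sound but gives a worse dimension dependence (of order $n^2$ rather than $n$), which is immaterial here, and it leaves one step asserted rather than proved: that a $\frac1{n-1}$-concave density on an interval of length $L$ has variance at least $c_nL^2$. This is true and quick to justify -- if $h=g^{1/(n-1)}$ is concave and nonnegative on the support, comparing $g$ with the linear decay of $h$ towards the farther endpoint gives $\int g\ge \max(g)\,L/(2n)$, hence $\max(g)\le 2n/L$, and any density bounded by $M$ has variance at least $1/(12M^2)$, so $\Var\ge L^2/(48n^2)$ -- but as written it is a borrowed fact; citing \cite{kls} as the paper does (or including the two-line argument above) closes it. Your remark about degenerate $E\le 0$ and about not attempting a naive compactness argument in $E$ is apt, and the paper sidesteps both issues the same way, since the KLS-type bound is uniform over all convex bodies in a fixed dimension.
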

\begin{proof}
Combining the later proposition and theorem yields the result for $n\ge n_0(V,\varepsilon)$.
 In order to deal with smaller dimensions, we simply apply known dimension dependent bounds: e.g. Kannan, Lov\'asz and Simonovits \cite{kls} proved that 
$C_P(\lambda_K)\le \kappa n C_P^{Lin}(\lambda_K)$ for all convex bodies $K$ in $\R^n$, with 
$\kappa$ a universal constant. 
\end{proof}

\section{Asymptotic independence of coordinates}\label{sec:indep}
A classical observation, going back to Maxwell, but also attributed to Borel and to Poincaré, states that 
for a fixed $k$, the law of the first $k$ coordinates of a uniform random vector on the Euclidean sphere of $\mathbb R^n$, centered at the origin and of radius $\sqrt{n}$, tends to the law of $k$ independent standard Gaussian random variables as $n$ tends to infinity. Quantitative versions of this asymptotic independence property where given by Diaconis and Freedman \cite{diaconis-freedman}, as well as a similar result for the unit sphere of the 
$\ell_1$-norm, involving exponential variables in the limit. Extensions to random vectors distributed according to the cone measure on the surface of the unit ball $B_p^n$ were given by Rachev and R\"uschendorf \cite{rachev-r}, while Mogul'ski\u{\i}     \cite{mogulskii} dealt with the case of the normalized surface measure. Explicit calculations, or the probabilistic representation put forward in \cite{bgmn}, easily yield asymptotic independence results for the first $k$ coordinates of a uniform vector on the set $B_p^n$ itself, when $k$ is fixed and $n$ tend to infinity. 

In this section we study marginals of a random vector $\xi^{(n)}$ uniformly distributed on $B_{\Psi/E_n}^n$, where $E_n$ and $n$ tend to $\infty$.

Let us start with the simple case when $E_n=mn$ for some $m>0$, which can be written as $m=m_\lambda$ for some $\lambda>0$. Let $k\ge 1$ be a fixed integer, then the density at $(x_1,\ldots,x_k)\in \R^k$ of the first $k$ coordinates  $(\xi^{(n)}_1, \ldots, \xi^{(n)}_{k})$ is equal to 
\[ \frac{\Vol_{n-k} \big(B^n_{\Phi/E_n}\cap \{y\in \mathbb R^n;\; y_i=x_i, \, \forall i\le k \}\big) }{\Vol\big(B^n_{\Phi/E_n}\big)}=\frac{\Vol\big(B_{\Psi/{(E_n - \sum_{i=1}^{k} \Psi(x_i))}}^{n-k}\big)}{\Vol\big(B_{\Psi/E_n}^n\big)} \] 
We apply Corollary \ref{cor:volume-asymptotic} twice: once for the denominator, and once for the numerator after writing 
\[m_\lambda n-\sum_{i\le k}\Psi(x_i)= m_\lambda (n-k)+  \frac{m_\lambda k-\sum_{i\le k}\Psi(x_i) }{\sqrt{n-k}} \sqrt{n-k}.\]
We obtain that the above ratio is equivalent to 
\[  \frac{Z_\lambda^{n-k} e^{\lambda (E_n-\sum_{i\le k} \Psi(x_i))}}{\lambda \sigma_\lambda \sqrt{2\pi (n-k)}} \cdot \frac{\lambda \sigma_\lambda \sqrt{2\pi n}}{Z_\lambda^n e^{\lambda E_n}}\sim \frac{e^{-\lambda\sum_{i=1}^k \Psi(x_i)}}{Z_\lambda^k}\cdot
\]
Thus we have proved the convergence in distribution of $(\xi^{(n)}_1, \ldots, \xi^{(n)}_{k})$
to $\mu_\lambda^{\otimes k}$ as $n$ tends to infinity. In other words the first $k$ coordinates
of $\xi^{(n)}$ are asymptotically i.i.d. of law $\mu_\lambda$. 
This is true for more general balls and  for a number of coordinates going also to infinity:
\begin{theorem}\label{th:marginals}
Let $E_n = m_\lambda n+\alpha_n \sigma_\lambda \sqrt n$, where $(\alpha_n)_{n\ge 1}$ is bounded.  Let the random vector $\xi^{(n)}$ be uniformly distributed on $B_{\Psi/E_n}^n$. For any $k_n = o(\sqrt{n})$, 
\[
 \lim_{n \to \infty}  d_{TV}\big((\xi^{(n)}_1, \ldots, \xi^{(n)}_{k_n}), \mu_\lambda^{\otimes k_n}\big) = 0.
\]
\end{theorem}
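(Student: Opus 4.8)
The plan is to express the marginal density of $(\xi^{(n)}_1,\dots,\xi^{(n)}_{k_n})$ as a ratio of volumes, compare it to the product density $g_n(x):=Z_\lambda^{-k_n}e^{-\lambda\sum_{i\le k_n}\Psi(x_i)}$ of $\mu_\lambda^{\otimes k_n}$, and use the representation $d_{TV}=\int(g_n-f_n)_+$. As in the computation preceding the statement, for $x=(x_1,\dots,x_{k_n})$ with $t:=\sum_{i\le k_n}\Psi(x_i)\le E_n$ the density of $(\xi^{(n)}_1,\dots,\xi^{(n)}_{k_n})$ at $x$ equals $f_n(x)=\Vol\big(B^{n-k_n}_{\Psi/(E_n-t)}\big)/\Vol\big(B^n_{\Psi/E_n}\big)$, and it vanishes for $t>E_n$. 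It then suffices to prove: (i) $f_n/g_n\to1$ uniformly on a region $\{t\le R_n\}$; and (ii) the complement $\{t>R_n\}$ carries $\mu_\lambda^{\otimes k_n}$-mass tending to $0$.

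For (ii), since $k_n=o(\sqrt n)$ we may choose $R_n$ with $k_n\ll R_n\ll\sqrt n$, for instance $R_n=\sqrt{k_n}\,n^{1/4}$. Writing $X_1,\dots,X_{k_n}$ for i.i.d.\ copies of $\mu_\lambda$ and $T_{k_n}:=\sum_{i\le k_n}\Psi(X_i)$, which has mean $m_\lambda k_n$, Markov's inequality gives $p_n:=\Pr(T_{k_n}>R_n)\le m_\lambda k_n/R_n\to0$ (Chebyshev with $\Var(T_{k_n})=\sigma_\lambda^2 k_n$ would do as well).

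For (i), fix $x$ with $0\le t\le R_n$; then $t\le E_n$ for $n$ large. We apply Theorem~\ref{th:volume-asymptotic} to the denominator with dimension $n$ and parameter $\alpha_n$, and to the numerator with dimension $N:=n-k_n$ and the parameter $\beta_n(t)$ determined by $E_n-t=m_\lambda N+\beta_n(t)\,\sigma_\lambda\sqrt N$, i.e.\ $\beta_n(t)=\alpha_n\sqrt{n/(n-k_n)}+\frac{m_\lambda k_n-t}{\sigma_\lambda\sqrt{n-k_n}}$. Since $(\alpha_n)$ is bounded and $k_n,R_n=o(\sqrt n)$, one has $\beta_n(t)=\alpha_n+o(1)$ uniformly over $t\in[0,R_n]$; in particular $|\beta_n(t)|\le A$ for some constant $A$ and all large $n$. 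The powers of $Z_\lambda$ and the exponential factors $e^{\lambda(\cdot)}$ in numerator, denominator and in $g_n$ cancel exactly, leaving
\[ \frac{f_n(x)}{g_n(x)}=\sqrt{\frac{n}{n-k_n}}\;e^{(\alpha_n^2-\beta_n(t)^2)/2}\,\big(1+O(n^{-1/2})\big), \]
where the $O(n^{-1/2})$ collects the two error terms of Theorem~\ref{th:volume-asymptotic} and is uniform over $t\in[0,R_n]$ because that theorem's error is non-decreasing in $|\alpha|$ (here bounded by $A$) and $N=n-k_n\to\infty$. As $\sqrt{n/(n-k_n)}\to1$ and $\alpha_n^2-\beta_n(t)^2\to0$ uniformly, we conclude $\eta_n:=\sup_{0\le t\le R_n}\big|f_n/g_n-1\big|\to0$.

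Combining, and using on $\{t\le R_n\}$ that $f_n\ge(1-\eta_n)g_n$ hence $(g_n-f_n)_+\le\eta_n g_n$, we obtain $d_{TV}=\int(g_n-f_n)_+\le\eta_n\int_{\{t\le R_n\}}g_n+\int_{\{t>R_n\}}g_n\le\eta_n+p_n\to0$, which is the claim. The delicate point is step (i): one must ensure the error terms supplied by Theorem~\ref{th:volume-asymptotic} for the $(n-k_n)$-dimensional numerator volume are uniformly small over the whole range $t\in[0,R_n]$, which is precisely where the monotone dependence of that error on $|\alpha|$ and the bound $|\beta_n(t)|\le A$ are needed. The hypothesis $k_n=o(\sqrt n)$ enters twice: to keep $\beta_n(t)$ bounded and asymptotically equal to $\alpha_n$ in (i), and to permit a choice $k_n\ll R_n\ll\sqrt n$ in (ii).
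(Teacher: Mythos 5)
Your proof is correct and follows essentially the same route as the paper: the same cutoff $R_n$ with $k_n\ll R_n\ll\sqrt n$, the same Markov bound on the tail, and the same application of Theorem~\ref{th:volume-asymptotic} with uniform control via $\beta_n(t)=\alpha_n+o(1)$ and the monotone dependence of the error on $|\alpha|$. The one small refinement is your use of the one-sided identity $d_{TV}=\int(g_n-f_n)_+$ together with $(g_n-f_n)_+\le g_n$ on the tail; this removes the need to bound $\Pr\big(\sum_{i\le k_n}\Psi(\xi_i^{(n)})>R_n\big)$ separately, which the paper handles by invoking exchangeability of the coordinates of $\xi^{(n)}$ and the constraint $\sum_{i\le n}\Psi(\xi_i^{(n)})\le E_n$.
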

\begin{proof}
Below, we simply  write 
$\xi_i$ for $\xi^{(n)}_i$. Recall that $(X_i)$ are i.i.d. r.v.'s with the distribution $\mu_\lambda$. Set $t_n := n^{1/4}k_n^{1/2}$ so that $t_n = o(\sqrt{n})$ and $k_n = o(t_n)$. The total variation distance between the law of $(\xi^{(n)}_1, \ldots, \xi^{(n)}_{k_n})$ and $\mu_\lambda^{\otimes k_n}$ is
\begin{align}
  \nonumber
  & \int_{\R^{k_n}} \left| \frac{1}{\Vol(B_{\Psi/E_n}^n)} \int_{\R^{n-k_n}} \II_{\{(x, y) \in B_{\Psi/E_n}^n\}} \, dy - \frac{1}{Z_\lambda^{k_n}} e^{-\lambda(\Psi(x_1) + \cdots + \Psi(x_{k_n}))} \right| \, dx \\
  \nonumber
  \le& \int_{B_{\Psi/t_n}^{k_n}} \left| \frac{\Vol\big(B_{\Psi/{(E_n - \sum_{i=1}^{k_n} \Psi(x_i))}}^{n-k_n}\big)}{\Vol(B_{\Psi/E_n}^n)} - \frac{1}{Z_\lambda^{k_n}} e^{-\lambda(\Psi(x_1) + \cdots + \Psi(x_{k_n}))} \right| \, dx \\
  \nonumber
  &+ \Pr\big((\xi_1, \ldots, \xi_{k_n}) \not\in B_{\Psi/t_n}^{k_n}\big) 
   + \Pr\big((X_1, \ldots, X_{k_n}) \not\in B_{\Psi/t_n}^{k_n}\big) \\
   \label{eq:d_TV-term}
  =& \int_0^{t_n} \left| \frac{\Vol\big(B_{\Psi/{(E_n - t)}}^{n-k_n}\big)}{\Vol(B_{\Psi/E_n}^n)} - \frac{e^{-\lambda t}}{Z_\lambda^{k_n}} \right| \frac{d}{dt} \Vol(B_{\Psi/t}^{k_n}) \, dt \\
  \nonumber
  &+ \Pr\left(\sum_{i=1}^{k_n} \Psi(\xi_i) > t_n\right) + \Pr\left(\sum_{i=1}^{k_n} \Psi(X_i) > t_n\right).
\end{align}
By Markov's inequality,
\[
\Pr\left(\sum_{i=1}^{k_n} \Psi(X_i) > t_n\right) \le \frac{\E\big(\sum_{i=1}^{k_n} \Psi(X_i)\big)}{t_n} =\frac{k_n  m_\lambda}{t_n} = o(1)
\]
Similarly, and since by definition $\sum_{i=1}^n \Psi(\xi_i)\le E_n$ and the  $\xi_i$'s
are exchangeable
\[
  \Pr\left(\sum_{i=1}^{k_n} \Psi(\xi_i) > t_n\right) \le \frac{\E\big(\sum_{i=1}^{k_n} \Psi(\xi_i)\big)}{t_n}
  \le \frac{k_n E_n}{n t_n} = \frac{k_n m_\lambda}{t_n} = o(1).
\]
In order to estimate~\eqref{eq:d_TV-term}, we use Theorem~\ref{th:volume-asymptotic}. Since $k_n = o(\sqrt{n})$ and $t_n = o(\sqrt{n})$, we know that $E_n-t=m_\lambda (n-k_n)+\beta_n \sigma_\lambda \sqrt{n-k_n} $, where 
\[
\beta_n:= \alpha_n \sqrt{\frac{n}{n-k_n}}+ \frac{m_\lambda k_n-t}{\sigma_\lambda \sqrt{n-k_n}} 
\]
is a bounded sequence such that $\beta_n-\alpha_n=o(1)$, both properties holding
uniformly in $t \in [0, t_n]$. Therefore, Theorem~\ref{th:volume-asymptotic} applied to $B_{\Psi/(E_n-t)}^{n-k_n}$ gives
\[
  \Vol\big(B_{\Psi/{(E_n - t)}}^{n-k_n}\big) = \frac{Z_\lambda^{n-k_n} e^{\lambda (E_n - t)}}{\lambda \sigma_\lambda \sqrt{2\pi(n-k_n)}}  e^{-\beta_n^2/2}(1+o(1))
\]
uniformly in $t \in [0, t_n]$. On the other hand, Theorem~\ref{th:volume-asymptotic} applied to  $B_{\Psi/E_n}^n$ yields
\[
  \Vol\big(B_{\Psi/{E_n }}^{n}\big) = \frac{Z_\lambda^{n} e^{\lambda E_n }}{\lambda \sigma_\lambda \sqrt{2\pi n}}  e^{-\alpha_n^2/2}(1+o(1)).
\]
Combining the above two asymptotic expansions, we obtain
\[
  \frac{\Vol\big(B_{\Psi/{(E_n - t)}}^{n-k_n}\big)}{\Vol(B_{\Psi/E_n}^n)} = \frac{e^{-\lambda t}}{Z_\lambda^{k_n}} (1+o(1))
\]
uniformly in $t \in [0, t_n]$. Therefore the term~\eqref{eq:d_TV-term} equals
\[
  o(1) \int_0^{t_n} \frac{e^{-\lambda t}}{Z_\lambda^{k_n}} \frac{d}{dt} \Vol(B_{\Psi/t}^{k_n}) \, dt
  = o(1) \cdot \Pr\left(\sum_{i=1}^{k_n} \Psi(X_i) \le t_n\right) = o(1).
\]
\end{proof}

The next result gives the asymptotic distribution of a sort of distance to the boundary for high-dimensional Orlicz balls.
\begin{theorem}\label{th:boundary-distance}
Let $E_n = m_\lambda n+\alpha_n \sigma_\lambda \sqrt n$, where $(\alpha_n)_{n\ge 1}$ is bounded.  Let the random vector $\xi^{(n)}$ be uniformly distributed on $B_{\Psi/E_n}^n$. 
Then the following convergence in distribution occurs as $n$ goes to infinity:
\[ 
\lambda\cdot  \Big( E_n-\sum_{i=1}^n \Phi\big(\xi_i^{(n)}\big) \Big) \longrightarrow \mathcal{E}xp(1).
\]
\end{theorem}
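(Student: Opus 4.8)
The plan is to compute, for fixed $u>0$, the probability that the "slack" $R_n:=E_n-\sum_{i=1}^n\Psi(\xi_i^{(n)})$ exceeds $u/\lambda$, and to show it tends to $e^{-u}$. Writing $\xi^{(n)}$ via the layer-cake / coarea picture used already in Section~\ref{sec:indep}, the density of the real random variable $R_n$ is proportional to $t\mapsto \frac{d}{dt}\Vol\big(B^n_{\Psi/(E_n-t)}\big)$ on $[0,E_n]$; equivalently, for $u\ge 0$,
\[
\Pr\!\Big(R_n>\tfrac{u}{\lambda}\Big)=\frac{\Vol\big(B^n_{\Psi/(E_n-u/\lambda)}\big)}{\Vol\big(B^n_{\Psi/E_n}\big)}.
\]
So the whole statement reduces to understanding the ratio of volumes of two Orlicz balls with the same dimension $n$ but energies differing by the \emph{constant} $u/\lambda$, and this is exactly what Theorem~\ref{th:volume-asymptotic} controls.

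The key step is then a direct application of Theorem~\ref{th:volume-asymptotic} to numerator and denominator. Write $E_n=m_\lambda n+\alpha_n\sigma_\lambda\sqrt n$ with $(\alpha_n)$ bounded, and $E_n-u/\lambda=m_\lambda n+\beta_n\sigma_\lambda\sqrt n$ with $\beta_n=\alpha_n-\frac{u}{\lambda\sigma_\lambda\sqrt n}$, so that $(\beta_n)$ is also bounded and $\beta_n-\alpha_n=o(1)$ (in fact $O(n^{-1/2})$), uniformly for $u$ in a compact set. Since the $O(n^{-1/2})$ term in Theorem~\ref{th:volume-asymptotic} is non-decreasing in $|\alpha|$ and hence uniform over bounded $\alpha$'s, both balls get the same asymptotic expansion, and in the ratio the factors $Z_\lambda^n$, $(\lambda\sigma_\lambda\sqrt{2\pi n})^{-1}$ cancel, while $e^{\lambda E_n}/e^{\lambda(E_n-u/\lambda)}=e^{-u}$ survives and $e^{-\beta_n^2/2}/e^{-\alpha_n^2/2}\to 1$. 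Therefore $\Pr(R_n>u/\lambda)\to e^{-u}$ for every $u>0$, which is precisely convergence in distribution of $\lambda R_n$ to $\mathcal Exp(1)$ (the limit law being continuous, pointwise convergence of the tail functions suffices; one may also note $\Pr(R_n<0)=0$ handles $u\le 0$).

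The main obstacle — really the only thing needing care — is the legitimacy of the identity $\Pr(R_n>u/\lambda)=\Vol(B^n_{\Psi/(E_n-u/\lambda)})/\Vol(B^n_{\Psi/E_n})$, i.e.\ that the event $\{\sum_i\Psi(\xi_i^{(n)})\le E_n-u/\lambda\}$ has uniform-measure equal to the ratio of the sublevel-set volumes. This is immediate: $\{x\in B^n_{\Psi/E_n}:\sum_i\Psi(x_i)\le E_n-u/\lambda\}=B^n_{\Psi/(E_n-u/\lambda)}$ as sets (the extra constraint is the binding one), so the conditional probability is just the volume ratio. One should also check the trivial degenerate cases: if $E_n-u/\lambda\le 0$ the numerator vanishes and the probability is $0$, consistent with $e^{-u}$ only in the limit — but since $E_n\to\infty$ this never occurs for large $n$ and fixed $u$, so it is harmless. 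No tightness argument is needed beyond this, because we directly obtain convergence of the distribution functions.
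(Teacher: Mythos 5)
Your proposal is correct and matches the paper's proof essentially verbatim: both express $\Pr\big(\lambda R_n > u\big)$ as the volume ratio $\Vol\big(B^n_{\Psi/(E_n-u/\lambda)}\big)/\Vol\big(B^n_{\Psi/E_n}\big)$ and then apply Theorem~\ref{th:volume-asymptotic} to numerator and denominator (with the shifted parameter $\beta_n=\alpha_n-\tfrac{u}{\lambda\sigma_\lambda\sqrt n}\to\alpha_n$ uniformly), so that everything cancels except $e^{\lambda E_n}/e^{\lambda(E_n-u/\lambda)}=e^{-u}$. The additional remarks on degenerate cases and on why pointwise tail convergence suffices are fine but not strictly needed.
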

\begin{proof}
Let $S_n:=E_n-\sum_{i=1}^n \Phi\big(\xi_i^{(n)}\big)\ge 0$.  For $t\ge 0$,
\[
\Pr(S_n\ge t) = \Pr \left(\sum_{i=1}^n \Phi\big(\xi_i^{(n)}\big)\le E_n-t \right)
=\frac{\Vol\big(B^n_{\Psi/(E_n-t)}\big)}{\Vol\big(B^n_{\Psi/E_n}\big)} \cdot
\]
As before, Theorem~\ref{th:volume-asymptotic} applied to  $B_{\Psi/E_n}^n$ yields
\[
  \Vol\big(B_{\Psi/{E_n }}^{n}\big) \sim \frac{Z_\lambda^{n} e^{\lambda E_n }}{\lambda \sigma_\lambda \sqrt{2\pi n}}  e^{-\alpha_n^2/2},
\]
whereas applied to $B_{\Psi/E_{n-t}}^n$ it gives
\[
  \Vol\big(B_{\Psi/{E_n-t }}^{n}\big) \sim \frac{Z_\lambda^{n} e^{\lambda (E_n-t) }}{\lambda \sigma_\lambda \sqrt{2\pi n}}  e^{-\left(\alpha_n-\frac{t}{\sigma_\lambda \sqrt{n}}\right)^2/2}.
\]
Taking the quotient gives $\lim_n \Pr(S_n\ge t) =e^{-\lambda t}.$	
\end{proof}

\section{Integrability of linear functionals}\label{sec:linear}
Linear functionals of uniform random vectors on convex bodies are well studied quantities. Their density function, known as the parallel section function, measures the volume of hyperplane sections in a given direction. We refer e.g. to the book \cite{greekBOOK}, and in particular to its sections 2.4 and 8.2 about the $\psi_1$ and $\psi_2$ properties, which describe uniform integrability features (exponential integrability for $\psi_1$, Gaussian type integrability for $\psi_2$). They can be expressed by upper bounds on the Laplace transform.

In this section, we deal with even Young functions $\Psi$, so that the corresponding sets
$B^n_\Psi$ are origin-symmetric, and actually unconditional. The forthcoming study is valid for any dimension, without taking limits, so we consider the dimension $n$ fixed, and write $\xi=(\xi_1,\ldots,\xi_n)$
for a uniform random vector on $B^n_\Psi$. We show that the arguments of \cite{barthe-koldobsky} for $\ell_p^n$ unit balls extend to Orlicz balls. 

\begin{lemma}\label{lem:laplace}
Let $a\in \R^n$, and $\xi$ be uniform on $B^n_\Psi$, then
\[ \E e^{\langle a, \xi\rangle}\le \prod_{i=1}^n \E e^{a_i \xi_1}.\]
\end{lemma}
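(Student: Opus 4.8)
The plan is to compare the uniform measure on $B^n_\Psi$ with a product structure via a conditioning argument, in the spirit of \cite{barthe-koldobsky}. Write $R = \sum_{i=1}^n \Psi(\xi_i)$, which takes values in $[0,1]$, and condition on $R$: given $R = r$, the vector $\xi$ is uniformly distributed on the Orlicz \emph{sphere} $\{x : \sum_i \Psi(x_i) = r\}$. The key observation is that, on each such level set, the conditional law of $\xi$ has a natural representation. Indeed, for the uniform measure on $B^n_\Psi$, the layer-cake / coarea formula shows that for any bounded test function $f$,
\[
 \E f(\xi) = \frac{1}{\Vol(B^n_\Psi)}\int_0^1 \Big( \int_{\{\sum_i \Psi(x_i) = r\}} f(x)\, \frac{d\mathcal H^{n-1}(x)}{|\nabla(\sum_i\Psi(x_i))|}\Big)\, dr,
\]
so the conditional density of $\xi$ given $R=r$ is proportional, on the level set, to $1/|\nabla \sum_i \Psi(x_i)|$, which depends only on the coordinates through the $\Psi'(x_i)$. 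The heart of the matter is that this conditional law is \emph{unconditional} and has the property that its coordinates are \emph{negatively associated} — raising one coordinate (in the sense of $\Psi$-mass) forces the others down because of the hard constraint $\sum_i \Psi(x_i) = r$.

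Concretely, I would establish the inequality by induction on $n$, peeling off one coordinate at a time. Fix the first coordinate $\xi_1 = x_1$; conditionally on $\xi_1 = x_1$, the remaining vector $(\xi_2,\ldots,\xi_n)$ is uniform on $B^{n-1}_{\Psi/(1-\Psi(x_1))}$, a dilated Orlicz ball in dimension $n-1$. By the induction hypothesis applied to this lower-dimensional ball (the lemma is dilation-covariant in the obvious way, since $B^{n-1}_{\Psi/s}$ is still of the required form after rescaling $\Psi$), one gets
\[
 \E\big( e^{\langle (a_2,\dots,a_n),(\xi_2,\dots,\xi_n)\rangle} \mid \xi_1 = x_1\big) \le \prod_{i=2}^n \E\big( e^{a_i \eta_i}\big),
\]
where $\eta_i$ is the $i$-th coordinate of a uniform vector on $B^{n-1}_{\Psi/(1-\Psi(x_1))}$. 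Thus
\[
 \E e^{\langle a,\xi\rangle} \le \E\Big( e^{a_1\xi_1} \prod_{i=2}^n \E\big(e^{a_i\eta_i(\xi_1)}\big)\Big),
\]
and it remains to show that each factor $\E e^{a_i \eta_i(x_1)}$ is, after integrating against the law of $\xi_1$, dominated by the one-dimensional quantity $\E e^{a_i\xi_1}$ with $\xi_1$ the first coordinate of a uniform vector on $B^n_\Psi$. This reduces the whole lemma to the case $n=2$: namely, to the statement that if $(\xi_1,\xi_2)$ is uniform on $B^2_\Psi$ then $\E e^{a_1\xi_1 + a_2\xi_2} \le \E e^{a_1\xi_1}\,\E e^{a_2\xi_2}$, i.e. that the two coordinates satisfy a negative-correlation inequality for exponentials.

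The main obstacle is precisely this planar base case: proving that the coordinates of a uniform vector on $B^2_\Psi$ are negatively correlated in the exponential sense. The way I would attack it is through the FKG / Harris-type structure of the density: the indicator $\mathbf 1_{\Psi(x_1)+\Psi(x_2)\le 1}$ is, for an even convex $\Psi$, a \emph{decreasing} function of $(\Psi(x_1),\Psi(x_2))$ and hence induces negative association between $\Psi(\xi_1)$ and $\Psi(\xi_2)$; one then needs to transfer this to negative association of well-chosen monotone functions of $\xi_1$ and $\xi_2$ themselves. Using unconditionality one may condition on the signs of $\xi_1,\xi_2$ and work on the positive quadrant, where $x_i \mapsto \Psi(x_i)$ is increasing, so that $e^{a_i x_i}$ with $a_i$ of a fixed sign is a monotone function of $\Psi(x_i)$; the sign cases are handled by symmetry. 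Making this transfer rigorous — in particular checking that the change of variables $u_i = \Psi(x_i)$ yields a density on the simplex $\{u_1+u_2\le 1\}$ that is a product (it is, since $dx_i = du_i/\Psi'(\Psi^{-1}(u_i))$ factorizes) and then invoking the one-dimensional FKG inequality — is the crux; everything else is bookkeeping with dilations and the induction.
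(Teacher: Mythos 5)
Your inductive reduction does not close, and the place where it breaks is exactly the step you call ``bookkeeping.'' After conditioning on $\xi_1=x_1$ and applying the $(n-1)$-dimensional inequality to the conditional ball $B^{n-1}_{\Psi/(1-\Psi(x_1))}$, you arrive at
\[
\E e^{\langle a,\xi\rangle}\;\le\;\E_{\xi_1}\Bigl[e^{a_1\xi_1}\,\prod_{i=2}^n h_i(\xi_1)\Bigr],
\qquad h_i(x_1):=\E\, e^{a_i\eta_1(x_1)},
\]
where $\eta(x_1)$ is uniform on $B^{n-1}_{\Psi/(1-\Psi(x_1))}$. By the tower property $\E_{\xi_1}h_i(\xi_1)=\E e^{a_i\xi_2}=\E e^{a_i\xi_1}$, so what is left to prove is a \emph{decoupling of a product of $n$ functions of the single scalar $\xi_1$}, namely $\E\bigl[e^{a_1\xi_1}\prod_{i\ge 2}h_i(\xi_1)\bigr]\le\E e^{a_1\xi_1}\cdot\prod_{i\ge 2}\E h_i(\xi_1)$. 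This is not the two-dimensional negative-correlation statement you claim to have reduced to, and it points in the wrong direction for FKG: each $h_i$ depends on $\xi_1$ only through $\Psi(\xi_1)$ and is \emph{decreasing} in $\Psi(\xi_1)$ (shrinking the complementary ball stochastically shrinks $|\eta_1|$, hence shrinks $\E\cosh(a_i\eta_1)$), so by Chebyshev's correlation inequality the $h_i(\xi_1)$ are \emph{positively} correlated, i.e.\ $\E\prod_{i\ge 2}h_i(\xi_1)\ge\prod_{i\ge 2}\E h_i(\xi_1)$. The single opposite-monotone factor $\cosh(a_1\xi_1)$ (after symmetrizing in the sign of $\xi_1$) cannot reverse all of these; one application of Chebyshev peels it off, leaving you again with a product of same-monotone functions and an inequality you cannot get from one-dimensional FKG. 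So the induction, as set up, is not merely unfinished; its natural completion fails.

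The paper's argument is structurally different and much shorter. By unconditionality, replace $\xi_i$ with $\varepsilon_i\xi_i$ for independent Rademacher signs and average over the signs, turning the goal into $\E\prod_i\cosh(a_i\xi_i)\le\prod_i\E\cosh(a_i\xi_i)$. This is then a direct consequence of the subindependence of coordinates for uniform measures on generalized Orlicz balls proved by Pilipczuk and Wojtaszczyk~\cite{p-w}: the absolute values $|\xi_1|,\dots,|\xi_n|$ are negatively associated, so the expectation of a product of nonnegative increasing functions of the $|\xi_i|$ is bounded by the product of expectations. All the genuine difficulty is absorbed into that cited theorem. What you are proposing amounts to re-deriving a version of that negative-association result from scratch via a one-coordinate peeling and FKG in the variables $u_i=\Psi(x_i)$; if you want to go that route, the decoupling must be carried out over \emph{all $n$ coordinates simultaneously} (as Pilipczuk--Wojtaszczyk do), not one coordinate at a time.
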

\begin{proof}
 Let $\varepsilon_1,\ldots,\varepsilon_n$ be i.i.d. random variables with $\Pr(\varepsilon_i=1)=
 \Pr(\varepsilon_i=-1)=\frac12$, and independent of $\xi$. Then by symmetry of $\Psi$,
 $(\varepsilon_1\xi_1,\ldots,\varepsilon_n\xi_n)$ has the same distribution as $\xi$. Hence,
 \[ \E e^{\langle a, \xi\rangle}=\E \prod_{i=1}^n e^{a_i\varepsilon_i\xi_i}=\E\left( \E\Big( \prod_{i=1}^n e^{a_i\varepsilon_i\xi_i}\, \Big| \, \xi\Big)\right)=\E \prod_{i=1}^n  \cosh(a_i \xi_i).\]
 Next by the subindependence property of coordinates, due Pilipczuk and Wojtaszczyk \cite{p-w}, and using the symmetry again as well as exchangeability:
 \[ \E e^{\langle a, \xi\rangle} \le \prod_{i=1}^n\E \cosh(a_i \xi_i)= \prod_{i=1}^n\E e^{a_i \xi_i}
 =\prod_{i=1}^n\E e^{a_i \xi_1}.\]
\end{proof}
The above lemma shows that the Laplace transform of any linear functional $\langle a,\xi\rangle$ can be upper estimated using the Laplace transform of the first coordinate $\xi_1$. Therefore it is natural to study the law of $\xi_1$. For $t\in\R$ consider the section of $B_\Psi^n$:
\[ S(t):=\{ y\in \R^{n-1};\; (t,y)\in B_\Psi^n\}.\]
and $f(t):=\Vol_{n-1}(S(t))$. Then $\Pr_{\xi_1} (dt)=f(t)dt/\Vol_n(B^n_\Psi)$. By the Brunn principle, $f$ is a log-concave function. It is also even by symmetry of the ball, therefore it is non-increasing on $\R^+$. We observe that a slightly stronger property holds:

\begin{lemma}\label{lem:section}
Let $\Psi$ be an even Young function and $f(t)=\Vol_n\big(\{ y\in \R^{n-1};\; (t,y)\in B_\Psi^n\}
\big)$. Then the function $\log f\circ \Psi^{-1}$ is concave and non-increasing on $\R^+$.
Here $\Psi^{-1}$ is the reciprocal function of the restriction of $\Psi$ to $\R^+$.
 \end{lemma}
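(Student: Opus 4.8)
The plan is to reduce the claim to the one-dimensional Orlicz ball structure and the Prékopa--Leindler / Brunn principle. Write $g(u) := f(\Psi^{-1}(u))$ for $u \in [0, u_{\max}]$, where $u_{\max}$ is the supremal value of $\Psi$ on the relevant range (possibly $+\infty$), so that we must show $\log g$ is concave and non-increasing. The monotonicity is immediate: $f$ is even and log-concave on $\R$ by the Brunn principle, hence non-increasing on $\R^+$; since $\Psi^{-1}$ is non-decreasing on $\R^+$, the composition $g$ is non-increasing, and so is $\log g$. The substance of the lemma is therefore the concavity of $\log g$.

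For concavity, the key structural observation is that the section $S(t) = \{y \in \R^{n-1} : \Psi(t) + \sum_{i=1}^{n-1} \Psi(y_i) \le 1\}$ depends on $t$ only through the value $s = \Psi(t)$; explicitly, $S(t)$ is the dilate-type set $\{y : \sum \Psi(y_i) \le 1 - s\}$, i.e. an $(n-1)$-dimensional Orlicz ball at level $1-s$. So in fact $g(s) = \Vol_{n-1}(B^{n-1}_{\Psi/(1-s)})$ for $s \in [0,1]$ (and $g(s) = 0$ for $s \ge 1$). Thus the lemma is equivalent to the statement that $s \mapsto \log \Vol_{n-1}(B^{n-1}_{\Psi/(1-s)})$ is concave on $[0,1]$, or after the affine change of variable $r = 1-s$, that $r \mapsto \log \Vol_{n-1}(B^{n-1}_{\Psi/r})$ is concave on $[0,1]$. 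This last fact follows from Prékopa--Leindler: for $r_0, r_1 > 0$ and $\theta \in [0,1]$, with $r_\theta = (1-\theta)r_0 + \theta r_1$, convexity of $\sum_i \Psi(y_i)$ in $y$ gives the inclusion $(1-\theta) B^{n-1}_{\Psi/r_0} + \theta B^{n-1}_{\Psi/r_1} \subset B^{n-1}_{\Psi/r_\theta}$ (if $\sum \Psi(x_i) \le r_0$ and $\sum \Psi(y_i) \le r_1$ then $\sum \Psi((1-\theta)x_i + \theta y_i) \le (1-\theta)\sum\Psi(x_i) + \theta\sum\Psi(y_i) \le r_\theta$), and then the Brunn--Minkowski inequality yields $\Vol_{n-1}(B^{n-1}_{\Psi/r_\theta})^{1/(n-1)} \ge (1-\theta)\Vol_{n-1}(B^{n-1}_{\Psi/r_0})^{1/(n-1)} + \theta \Vol_{n-1}(B^{n-1}_{\Psi/r_1})^{1/(n-1)}$, which is stronger than log-concavity of $r \mapsto \Vol_{n-1}(B^{n-1}_{\Psi/r})$.

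Putting these together: composing the concave non-increasing affine reparametrization $s \mapsto 1-s$ with the concave map $r \mapsto \log\Vol_{n-1}(B^{n-1}_{\Psi/r})$ shows $s \mapsto \log g(s)$ is concave on $[0,1]$; extending by $\log g \equiv -\infty$ (value $0$ for $g$) on $[1, u_{\max}]$ preserves concavity since $g$ is already non-increasing and continuous down to the point where it vanishes. Hence $\log g = \log f \circ \Psi^{-1}$ is concave and non-increasing on $\R^+$, as claimed.

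I expect the only genuinely delicate point to be bookkeeping at the endpoints and domain issues: one must be slightly careful about whether $\Psi$ is bounded or unbounded (so that $\Psi^{-1}$ is defined on all of $\R^+$ or only on $[0,\sup\Psi)$), about the behaviour of $g$ where it hits $0$, and about the case $n=1$ (where $f$ is the indicator of $\{t : \Psi(t)\le 1\}$ and the statement is a triviality). None of these requires real work — the conceptual content is entirely the inclusion $(1-\theta)B^{n-1}_{\Psi/r_0} + \theta B^{n-1}_{\Psi/r_1} \subset B^{n-1}_{\Psi/r_\theta}$ plus Brunn--Minkowski — but they are the places where a careless write-up could go wrong.
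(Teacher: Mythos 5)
Your proposal is correct and follows essentially the same route as the paper: both derive the Minkowski-sum inclusion between hyperplane sections indexed by the $\Psi$-level (your $(1-\theta)B^{n-1}_{\Psi/r_0}+\theta B^{n-1}_{\Psi/r_1}\subset B^{n-1}_{\Psi/r_\theta}$ is exactly the paper's $(1-\theta)S(t)+\theta S(u)\subset S(\Psi^{-1}((1-\theta)\Psi(t)+\theta\Psi(u)))$ in a different parametrization), and then apply Brunn--Minkowski to conclude log-concavity of $f\circ\Psi^{-1}$. The only cosmetic difference is that you spell out the monotonicity argument and the endpoint/domain bookkeeping, which the paper leaves implicit.
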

\begin{proof}
Let $t,u\ge 0$. Let $a\in S(t)$ and $b\in S(u)$. Then by definition
 \[
\Psi(t)+\sum_{i=1}^{n-1} \Psi(a_i)\le 1 \quad\mathrm{and}\quad
\Psi(u)+\sum_{i=1}^{n-1} \Psi(b_i)\le 1. 
 \]
 Averaging these two inequalities and using the convexity of $\Psi$, we get for any $\theta\in (0,1)$:
 \begin{equation} \label{eq:section}
 (1-\theta)\Psi(t)+\theta \Psi(u) +\sum_{i=1}^{n-1} \Psi\left((1-\theta)a_i+\theta b_i\right)\le 1.
\end{equation}  
 This can be rewritten as 
   \[ (1-\theta)a+\theta b \in S\left( \Psi^{-1}\left( (1-\theta)\Psi(t)+\theta\Psi(u)\right) \right). \]
 Hence we have shown that 
  \[ (1-\theta)S(t)+\theta S(u) \subset S\left( \Psi^{-1}\left( (1-\theta)\Psi(t)+\theta \Psi(u)\right) \right),\]
  and by the Brunn-Minkowski inequality, in multiplicative form
  \[ f(t)^{1-\theta} f(u)^\theta\le  f\left( \Psi^{-1}\left( (1-\theta)\Psi(t)+\theta \Psi(u)\right) \right).\]
  Note that if in \eqref{eq:section} we had used convexity in the form 
  $\Psi((1-\theta)t+\theta u)\le  (1-\theta)\Psi(t)+\theta \Psi(u)$, then we would have derived the Brunn principle from the Brunn-Minkowski inequality. 
\end{proof}

The next result shows that $\Psi$ is more convex than the square function, the corresponding 
Orlicz balls enjoy the $\psi_2$ property. This applies in particular to $B_p^n$ for $p\ge 2$, a case which was treated in \cite{barthe-koldobsky}.

\begin{theorem}
Let $\Psi$ be an even Young function, such that $t>0\mapsto \Psi(\sqrt{t})$ is convex. 
Let $\xi$ be a uniform random vector on $B^n_\Psi$.
Then for all $a\in \R^n$,
\[  \E e^{\langle a, \xi\rangle}\le \left( \E e^{\frac{|a|}{\sqrt n}\xi_1}\right)^n\le e^{\frac12\E \big(\langle a,\xi\rangle^2\big)}.\]
\end{theorem}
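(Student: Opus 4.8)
The plan is to prove the two inequalities separately. The first inequality, $\E e^{\langle a,\xi\rangle}\le \left(\E e^{\frac{|a|}{\sqrt n}\xi_1}\right)^n$, should follow from Lemma~\ref{lem:laplace} combined with a convexity argument on the map $b\mapsto \log \E e^{b\xi_1}$. Indeed, Lemma~\ref{lem:laplace} gives $\E e^{\langle a,\xi\rangle}\le \prod_{i=1}^n \E e^{a_i\xi_1}$, so it suffices to show that $\sum_i \log\E e^{a_i\xi_1}\le n\log\E e^{(|a|/\sqrt n)\xi_1}$. Let $g(b):=\log\E e^{b\xi_1}$. Since $\xi_1$ is symmetric, $g$ is even, and $g$ is convex (as a log-Laplace transform). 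The key point will be that $b\mapsto g(\sqrt t)$, i.e. $t\mapsto g(\sqrt t)$ on $\R^+$, is convex: this is where the hypothesis that $t\mapsto \Psi(\sqrt t)$ is convex enters, via Lemma~\ref{lem:section}. Granting that $t\mapsto g(\sqrt t)$ is convex, Jensen's inequality applied to $t_i=a_i^2$ gives $\frac1n\sum_i g(\sqrt{a_i^2})\le g\big(\sqrt{\tfrac1n\sum_i a_i^2}\big)=g(|a|/\sqrt n)$, which is exactly what we need.

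The main obstacle is therefore establishing that $t\mapsto \log\E e^{\sqrt t\,\xi_1}$ is convex on $\R^+$, from the fact that $\log f\circ\Psi^{-1}$ is concave (Lemma~\ref{lem:section}) and that $t\mapsto\Psi(\sqrt t)$ is convex. The density of $\xi_1$ is $f(t)/\Vol_n(B_\Psi^n)$; writing $F=\log f$, we know $F\circ\Psi^{-1}$ is concave. Composing with $t\mapsto\Psi(\sqrt t)$, whose convexity is assumed, and using that $F\circ\Psi^{-1}$ is non-increasing (Lemma~\ref{lem:section}), we get that $t\mapsto F(\sqrt t)=(F\circ\Psi^{-1})\circ(\Psi(\sqrt t))$ is concave. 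Thus $\xi_1$ has a density of the form $e^{h(t^2)}$ with $h$ concave; equivalently, $\xi_1^2$ has a log-concave density after the change of variables. I expect that a random variable $\xi_1$ whose density is $e^{h(t^2)}$ with $h$ concave has a log-Laplace transform $g(b)=\log\E e^{b\xi_1}$ with $g(\sqrt\cdot)$ convex; this is a one-dimensional statement that can be checked by differentiation, or deduced from known results on the behaviour of Laplace transforms of such ``$\psi_2$-type'' densities (this is precisely the structure exploited in \cite{barthe-koldobsky} for $\ell_p^n$).

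For the second inequality, $\left(\E e^{\frac{|a|}{\sqrt n}\xi_1}\right)^n\le e^{\frac12\E(\langle a,\xi\rangle^2)}$, I would first compute $\E(\langle a,\xi\rangle^2)$. By symmetry and exchangeability of the coordinates of $\xi$ (they are in fact unconditional, so uncorrelated), $\E(\langle a,\xi\rangle^2)=\sum_i a_i^2\,\E\xi_1^2=|a|^2\,\E\xi_1^2$. Hence it suffices to prove the one-dimensional bound $\E e^{c\xi_1}\le e^{\frac12 c^2 \E\xi_1^2}$ for $c=|a|/\sqrt n$ (indeed for all real $c$, after raising to the $n$-th power). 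Since $\xi_1$ is symmetric, $\E e^{c\xi_1}=\E\cosh(c\xi_1)$, and $\cosh(cx)\le e^{c^2x^2/2}$ pointwise; but that only gives $\E e^{c\xi_1}\le\E e^{c^2\xi_1^2/2}$, which is not quite the claim. Instead I would expand $\E\cosh(c\xi_1)=\sum_{k\ge0}\frac{c^{2k}}{(2k)!}\E\xi_1^{2k}$ and compare termwise with $e^{c^2\E\xi_1^2/2}=\sum_{k\ge0}\frac{c^{2k}}{2^k k!}(\E\xi_1^2)^k$; the needed inequality is $\frac{\E\xi_1^{2k}}{(2k)!}\le\frac{(\E\xi_1^2)^k}{2^k k!}$, i.e. $\E\xi_1^{2k}\le\frac{(2k)!}{2^k k!}(\E\xi_1^2)^k$. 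The factor $(2k)!/(2^k k!)=(2k-1)!!$ is exactly the $2k$-th moment of a Gaussian, so this says the even moments of $\xi_1$ are dominated by those of the Gaussian with the same variance — a hallmark of $\psi_2$ behaviour. This moment comparison should again follow from the log-concavity of the density of $\xi_1^2$ (equivalently, from $t\mapsto\log f(\sqrt t)$ concave), which forces the moment sequence of $\xi_1^2$ to grow no faster than that of an exponential variable, hence the even moments of $\xi_1$ to be sub-Gaussian. I expect this step to be relatively routine once the density structure from Lemma~\ref{lem:section} is in hand, with the first inequality's convexity claim being the genuine crux.
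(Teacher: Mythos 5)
Your plan for the first inequality matches the paper's proof in outline, but has a systematic sign error: the relevant property of the log-Laplace transform $g(b)=\log\E e^{b\xi_1}$ is that $t\mapsto g(\sqrt t)$ is \emph{concave} on $\R^+$, not convex. Your own Jensen step $\frac1n\sum_i g(\sqrt{a_i^2})\le g\big(\sqrt{\frac1n\sum_i a_i^2}\big)$ is the concave direction, so you have the right inequality but the wrong word throughout, which matters because you then try to argue for convexity of $g(\sqrt\cdot)$ from the hypotheses. The step you flag as ``the genuine crux'' and leave as ``I expect\ldots can be checked by differentiation, or deduced from known results'' is indeed the heart of the matter, and the paper does not leave it open: it invokes Theorem~12 of \cite{barthe-koldobsky}, which says precisely that if $u\mapsto\log f(\sqrt u)$ is concave then $t\mapsto\log\int e^{u\sqrt t}f(u)\,du$ is concave. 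Without that citation (or a proof) the argument has a real gap. Also, your parenthetical ``equivalently, $\xi_1^2$ has a log-concave density'' is false: the change of variables contributes a $1/\sqrt s$ Jacobian, which is log-convex, so concavity of $s\mapsto\log f(\sqrt s)$ is strictly stronger than log-concavity of the density of $\xi_1^2$ and does not follow from it.

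For the second inequality you propose a moment comparison $\E\xi_1^{2k}\le(2k-1)!!\,(\E\xi_1^2)^k$ which you neither prove nor reduce to the structure you already have; in particular it does \emph{not} ``follow from the log-concavity of the density of $\xi_1^2$'' (which, as noted, you don't have anyway). This detour is unnecessary: once you know $\phi(t):=g(\sqrt t)$ is concave on $\R^+$ with $\phi(0)=0$, a Taylor expansion gives $\phi(t)=\tfrac{t}{2}\E\xi_1^2+O(t^2)$ near $0$, so $\phi'(0^+)=\tfrac12\E\xi_1^2$, and concavity bounds $\phi$ by its tangent at $0$: $\phi(t)\le\tfrac{t}{2}\E\xi_1^2$, i.e.\ $L_{\xi_1}(b)\le e^{b^2\E\xi_1^2/2}$. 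This is the paper's one-line argument and dovetails with the concavity you have already established. Your computation $\E\langle a,\xi\rangle^2=|a|^2\E\xi_1^2$ via unconditionality is correct and matches the paper.
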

\begin{proof}
Let $L_X(t)=\E e^{tX}$ denote the Laplace transform of a real valued random variable. Then with the notation of Lemma \ref{lem:section},
\[ L_{\xi_1}(t)=\int e^{tu} f(u) \frac{du}{\Vol(B^n_\Psi)}.\]
Lemma \ref{lem:section} ensures that there exists a concave function $c$ such that
for all $u\ge 0$, $ \log f(u)=c(\Psi(u))$. Note that $c$ is also non-increasing on $\R^+$ since
the section function $f$ is. Hence 
\[ u\ge 0 \mapsto \log f(\sqrt{u})=c(\Psi(\sqrt u))\]
is concave. Theorem 12 of \cite{barthe-koldobsky} ensures that  $t\ge 0\mapsto \int_{\R} e^{u\sqrt t} f(u) \,du$ is log-concave. In other words,
 \[ t\ge 0\mapsto \log L_{\xi_1}(\sqrt t)\]
 is concave. 
 
 From Lemma \ref{lem:laplace}, using symmetry and the above concavity property
 \[ \E e^{\langle a,\xi\rangle} \le \prod_{i=1}^n L_{\xi_1}(a_i)= \prod_{i=1}^n L_{\xi_1}\left(\sqrt{a_i^2}\right)\le \left( L_{\xi_1}\left( \sqrt{\frac1n \sum_i a_i^2}\right)\right)^n
=L_{\xi_1}\left(\frac{|a|}{\sqrt n}\right)^n. 
 \]
 To conclude we need the bound $L_{\xi_1}(t)\le e^{t^2\E(\xi_1^2)/2}$ (it follows from the fact
 that $t\ge 0\mapsto \log L_{\xi_1}(\sqrt t)$ is concave, hence upper bounded by its tangent application at 0, which is easily seen to be $t\E(\xi_1^2)/2$).  We obtain 
  \[ \E e^{\langle a,\xi\rangle} \le e^{\frac12 |a|^2\E(\xi_1^2)},\]
 and we conclude using the symmetries of $\xi$ since
 \[ \E \big(\langle a,\xi\rangle^2\big)=\sum_i a_i^2 \E (\xi_i^2)+ \sum_{i\neq j} a_ia_j \E(\xi_i\xi_j)= |a|^2 \E(\xi_1^2). \] 
\end{proof}

{\bf Acknowledgements:} We are grateful to Emanuel Milman and Reda Chhaibi for useful discussions on related topics. We also thank Joscha Prochno for communicating his recent work to us.


\bigskip
\noindent Institut de Math\'ematiques de Toulouse, UMR 5219\\
Universit\'e de Toulouse \& CNRS
\\ UPS, F-31062 Toulouse Cedex 09, France. 

\hfill \verb"barthe@math.univ-toulouse.fr"

\hfill \verb"pwolff@mimuw.edu.pl"
\end{document}